\newtheorem{theorem}{Theorem}[section]
\newtheorem{lemma}[theorem]{Lemma}
\newtheorem{corollary}[theorem]{Corollary}
\newtheorem{remark}[theorem]{Remark}
\newtheorem{proposition}[theorem]{Proposition}
\newtheorem{definition}[theorem]{Definition}
\newtheorem{example}[theorem]{Example}
\newtheorem{problem}[theorem]{Problem}
\newproof{proof}{Proof}
\numberwithin{equation}{section}
\numberwithin{theorem}{section}
\newcommand{\e}{\varepsilon}
\newcommand{\w}{\omega}
\newcommand{\RR}{\mathbb{R}}
\newcommand{\IR}{\mathbb{R}}
\newcommand{\UU}{\mathcal{U}}
\newcommand{\Ra}{\Rightarrow}
\newcommand{\LRa}{\Leftrightarrow}
\newcommand{\cacx}{\overline{\mathrm{acx}}}
\newcommand{\SM}{{\setminus}}
\def\om{\omega}
\def\al{\alpha}
\def\be{\beta}
\def\sm/{{s.m.}}
\def\cm/{{c.m.}}
\begin{document}

\begin{frontmatter}

\title{Completeness and reflexivity type properties of $B_1(X)$} 

\author{Saak Gabriyelyan}
\ead{saak@math.bgu.ac.il}
\address{Department of Mathematics, Ben-Gurion University of the Negev, Beer-Sheva, Israel}

\author{Alexander V. Osipov}
\ead{OAB@list.ru}
\address{Krasovskii Institute of Mathematics and Mechanics, Ural Federal University, Yekaterinburg, Russia}

\author{Evgenii Reznichenko}
\ead{erezn@inbox.ru}
\address{Department of Mathematics, Lomonosov Mosow State University, Moscow, Russia}

\begin{abstract}
For a Tychonoff space $X$,  $B_1(X)$  denotes the space of all Baire-one functions on $X$ endowed with the pointwise topology. We prove that the following assertions are equivalent: (1) $B_1(X)$ is a (semi-)Montel space, (2) $B_1(X)$ is a (semi-)reflexive space, (3) $B_1(X)$ is a (quasi-)complete space, (4) $B_1(X)=\IR^X$, (5)  $X$ is a $Q_f$-space. It is proved that $B_1(X)$ is sequentially complete iff $B_1(X)$ is locally complete  iff   $X$ is a $CZ$-space. In the case when $K$ is a compact space, we show that $B_1(K)$ is locally complete  iff $K$ is scattered. We thoroughly study the case when $X$ is a separable metrizable space. Numerous distinguished examples are given.
\end{abstract}

\begin{keyword}
Baire one function\sep (semi-)Montel \sep (semi-)reflexive  \sep (quasi-)complete \sep sequentially complete \sep locally complete \sep $CZ$-space \sep Choquet space

\MSC[2020] 26A21 \sep 46A11 \sep 46A25 \sep 54C35 

\end{keyword}

\end{frontmatter}


\section{Introduction}


All topological spaces are assumed to be Tychonoff. For a space $X$, $C_p(X)$ denotes the space $C(X)$ of all real-valued functions on $X$ endowed with the pointwise topology. We denote by $B_1(X)$ the space of {\em Baire-one} functions on $X$, i.e., $B_1(X)$ is the family of all functions on $X$ which are limits of sequences in $C_p(X)$.

The spaces of Baire-one functions are widely studied in general topology and functional analysis.
If $M$ is a complete separable metrizable (= Polish) space, Bourgain, Fremlin and Talagrand proved in \cite{BFT} that $B_1(M)$ is an angelic space. The compact subsets of $B_1(M)$ (called Rosenthal compact) have been studied intensively by Rosenthal \cite{Rosenthal-compact}, Godefroy \cite{Godefroy}, Todor\v{c}evi\'{c} \cite{Todorcevic} and others. Various topological properties of $B_1(X)$ over an arbitrary space $X$ are characterized in \cite{BG-Baire,Gabr-B1,Gabr-seq-Ascoli,Osipov-25,Osipov-251,Osipov-252,Pytkeev-B1}.

The space $B_1(X)$ which satisfies some of the weak barrelledness conditions, $(DF)$-type properties, the Grothendieck property, Dunford--Pettis type properties, the Josefson--Nissenzweig property and Pe{\l}czy\'{n}ski's properties $V_{(p,q)}$ and $V_{(p,q)}^\ast$ were characterized in \cite{BG-Baire-lcs,BG-JNP,Gab-Pel}. We recall only the following results because they are used below (all relevant definitions are given in the correspondent place below).
\begin{theorem} \label{t:B1-Baire-like}
Let $X$ be a Tychonoff space. Then:
\begin{enumerate}
\item[{\rm(i)}]  $B_1(X)$ is Baire-like and hence barrelled {\rm(\cite{BG-Baire-lcs})}.
\item[{\rm(ii)}] $B_1(X)$ is Baire if, and only if, $X$ has the property $(\kappa)$ {\rm(\cite{Osipov-252})}.
\item[{\rm(iii)}] If $X$ is normal, then $B_1(X)=\IR^X$ if, and only if, $X$ is a $Q$-space {\rm(\cite{BG-Baire})}.
\item[{\rm(iv)}] If $X$ has countable pseudocharacter, then $B_1(X)$ is a Choquet space if, and only if, $X$ is a $\lambda$-space {\rm(\cite{BG-Baire})}.
\end{enumerate}
\end{theorem}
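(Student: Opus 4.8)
The four items are recollected from the literature, so the plan is to recall the shape of each argument; the unifying idea is that a ``largeness'' property of the locally convex space $B_1(X)$ corresponds to a smallness/covering property of $X$ measuring how far arbitrary real functions on $X$ remain from being sequential limits of continuous ones. For (i) I would argue by contradiction: assume $B_1(X)=\bigcup_n A_n$ with $(A_n)$ increasing, closed, absolutely convex and nowhere dense. Pass first to the Banach space $C_b(X)$ of bounded continuous functions, whose sup-norm topology refines the induced pointwise one, so the sets $A_n\cap C_b(X)$ are sup-norm-closed, absolutely convex and cover $C_b(X)$; by the Baire category theorem some $A_{n_0}\cap C_b(X)$ contains a sup-ball around $0$, and since the truncations of continuous functions make $C_b(X)$ pointwise dense in $B_1(X)$, taking pointwise closures yields $A_{n_0}\supseteq\{f\in B_1(X):\|f\|_\infty\le\delta\}$. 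The decisive step is then to upgrade this bounded absolutely convex set to an honest basic pointwise neighbourhood $\{f:|f(x_1)|\le\e,\dots,|f(x_k)|\le\e\}$: this calls for a gliding-hump construction built so as to keep the escaping function inside the first Baire class (for instance by assembling it from continuous humps that are small on a prescribed finite set), and I expect this bootstrap to be the technical core of \cite{BG-Baire-lcs} and the main obstacle. Barrelledness is then automatic, as Baire-like spaces are barrelled.

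Items (ii) and (iv) are game-theoretic, and for both the plan is to transfer the relevant topological game on $B_1(X)$ to a combinatorial game on $X$, exploiting that a basic pointwise open set constrains only finitely many coordinates, so that a whole play of the game touches only countably many points of $X$. For (ii) one plays the Banach--Mazur game on $B_1(X)$: the first player has no winning strategy precisely when $X$ has property $(\kappa)$, since a $(\kappa)$-type selection applied to the finite data read off the first player's moves lets the second player steer the intersection to a common function, while a failure of $(\kappa)$ is coded into a winning strategy for the first player. For (iv), under countable pseudocharacter one plays the Choquet game: the second player can maintain a decreasing chain of basic neighbourhoods with a common Baire-one function in the intersection exactly when every countable subset of $X$ is a $G_\delta$-set (the $\lambda$-property), the relevant countable set being the accumulated finite supports of the moves; conversely, running a putative winning strategy of the second player against first moves that shrink the values on a prescribed countable set forces that set to be $G_\delta$. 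Constructing these explicit strategies is the delicate point here.

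For (iii), if $B_1(X)=\IR^X$ then $\chi_A\in B_1(X)$ for every $A\subseteq X$; since the preimage of an open set under a Baire-one function is $F_\sigma$, both $A$ and $X\setminus A$ are $F_\sigma$, so every subset of $X$ is ambiguous, i.e.\ $X$ is a $Q$-space. Conversely, assuming $X$ normal and a $Q$-space, I would show $\chi_A\in B_1(X)$ for each ambiguous $A$ by a Urysohn-type argument --- the only place normality is used --- then note that $B_1(X)$, being a vector space that is closed under uniform limits, contains every simple function with ambiguous level sets, hence every bounded function by uniform approximation, and hence every function after conjugating by a homeomorphism $\IR\to(-1,1)$; thus $B_1(X)=\IR^X$.
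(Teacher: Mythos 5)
The paper does not actually prove Theorem \ref{t:B1-Baire-like}: all four items are quoted from \cite{BG-Baire-lcs}, \cite{Osipov-252} and \cite{BG-Baire}, so there is no in-paper argument to measure your proposal against. Judged on its own, your reconstruction is complete only for item (iii). There the forward direction is exactly right (preimages of open sets under Baire-one functions are $Zer_\sigma$, hence every $A$ and its complement are $F_\sigma$), and the converse works as you describe: in a normal $Q$-space every $A$ is simultaneously $F_\sigma$ and $G_\delta$, so writing $A=\bigcup_n F_n$ and $X\setminus A=\bigcup_n G_n$ with all sets closed, Urysohn functions $h_n$ equal to $1$ on $F_1\cup\dots\cup F_n$ and $0$ on $G_1\cup\dots\cup G_n$ converge pointwise to $\mathbf{1}_A$; simple functions, closure of $B_1(X)$ under uniform limits, and conjugation by a homeomorphism of $\IR$ onto $(-1,1)$ then finish the argument.

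For (i), (ii) and (iv), however, you defer precisely the steps that constitute the theorems. In (i), getting $A_{n_0}\supseteq\{f\in B_1(X):\|f\|_\infty\le\delta\}$ from the Baire category theorem in $(C_b(X),\|\cdot\|_\infty)$ and pointwise density of truncations is the easy half; that set is not a neighbourhood of zero in the pointwise topology for any infinite $X$, so the entire content of Baire-likeness is the bootstrap from ``absorbs the uniformly bounded part'' to ``contains a basic pointwise neighbourhood'', and you explicitly leave the gliding-hump construction that should accomplish this as ``the main obstacle''. In (ii) and (iv) you never state what property $(\kappa)$ means for a general Tychonoff space (the paper itself only identifies it with the $\kappa$-set notion for separable metrizable $X$ in Proposition \ref{p:kappa-set}), and the winning strategies in the Banach--Mazur and Choquet games are described only by what they ought to achieve; the converse directions, which extract a failure of the $\lambda$- or $(\kappa)$-property from a strategy, are exactly the delicate constructions and are not supplied. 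Since these are attributed results, citing them is legitimate, but as proofs your sketches of (i), (ii) and (iv) are plans with the decisive steps missing.
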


In the class of all locally convex spaces, one of the most important locally convex properties are completeness and reflexivity type properties. These properties are thoroughly studied in functional analysis, see for example the classical books \cite{Jar,PB}.  The next result concerning reflexivity type properties is the strongest one known up to now.

\begin{theorem}[\protect{\cite{BG-Baire-lcs}}] \label{t:B1-Baire-complete}
For a space $X$ of countable pseudocharacter, the following assertions are equivalent:
\begin{enumerate}
\item[{\rm (a)}] $B_1(X)$ is a Montel space;
\item[{\rm (b)}] $B_1(X)$ is a semi-Montel space;
\item[{\rm (c)}] $B_1(X)$ is a reflexive space;
\item[{\rm (d)}] $B_1(X)$ is a semi-reflexive space;
\item[{\rm (e)}] $B_1(X)$ is a complete space;
\item[{\rm (f)}] $B_1(X)$ is a quasi-complete space;
\item[{\rm (g)}] $B_1(X)=\IR^X$.
\end{enumerate}
\end{theorem}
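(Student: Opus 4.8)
The plan is to establish the cycle of implications (g) $\Rightarrow$ (a) $\Rightarrow$ (b) $\Rightarrow$ (d) $\Rightarrow$ (f), together with (a) $\Rightarrow$ (c) $\Rightarrow$ (d), (e) $\Rightarrow$ (f), and then close the loop by proving the one genuinely nontrivial implication (f) $\Rightarrow$ (g) (equivalently, the contrapositive: if $B_1(X) \neq \IR^X$ then $B_1(X)$ is not quasi-complete). Several of these steps are soft and can be dispatched quickly: if $B_1(X) = \IR^X$, then $B_1(X)$ is a product of lines, which is Montel, reflexive and complete, so (g) implies all of (a)--(f) at once. The standard locally convex implications (Montel $\Rightarrow$ semi-Montel $\Rightarrow$ semi-reflexive, reflexive $\Rightarrow$ semi-reflexive, Montel $\Rightarrow$ reflexive, complete $\Rightarrow$ quasi-complete, semi-Montel $\Rightarrow$ quasi-complete) are textbook facts (see \cite{Jar,PB}) and require only that $B_1(X)$ is a locally convex space, which it is as a subspace of $\IR^X$.

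So the real content is the single arrow from a completeness/reflexivity hypothesis back to (g). Here I would exploit Theorem~\ref{t:B1-Baire-like}(i): $B_1(X)$ is always barrelled. For barrelled spaces, semi-reflexive is equivalent to reflexive, and a barrelled space is semi-reflexive precisely when it is semi-Montel-free of the usual gap — more usefully, a quasi-complete barrelled space is reflexive iff every bounded set is relatively weakly compact, and in fact any of the hypotheses (b)--(f) forces $B_1(X)$ to be quasi-complete. Thus it suffices to show: \emph{if $B_1(X)$ is quasi-complete, then $B_1(X) = \IR^X$.} Suppose toward a contradiction that there is $f \in \IR^X \setminus B_1(X)$. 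The idea is to produce a bounded Cauchy net (or better, a bounded increasing/Cauchy sequence, using metrizability of bounded-type arguments on countably many coordinates) inside $B_1(X)$ whose limit in $\IR^X$ is (a truncation of) $f$, hence lies outside $B_1(X)$; this witnesses the failure of quasi-completeness. Concretely, replacing $f$ by $\arctan \circ f$ we may assume $f$ is bounded, so it lies in a bounded subset $B$ of $\IR^X$; if $B_1(X)$ were quasi-complete, the closure in $\IR^X$ of the bounded set $B \cap B_1(X)$ would be a complete subset of $B_1(X)$, and one shows $f$ lies in that closure because $B_1(X)$ is dense in $\IR^X$ in the pointwise topology while bounded subsets let us stay inside $B$ — giving $f \in B_1(X)$, a contradiction.

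The main obstacle — and the place where the hypothesis of countable pseudocharacter in Theorem~\ref{t:B1-Baire-complete} is presumably used — is controlling the density/closure argument so that the approximating net stays \emph{bounded}: pointwise density of $B_1(X)$ in $\IR^X$ is immediate (constants and hence simple functions are Baire-one, and $B_1(X) \supseteq C_p(X)$ which is dense), but a net approximating $f$ pointwise need not be uniformly bounded, so one must truncate, and truncations of Baire-one functions are again Baire-one, which keeps us in $B_1(X) \cap B$. The countable pseudocharacter enters to guarantee that singletons, and hence the relevant level sets, are $G_\delta$, so that the indicator functions used to build the approximations are themselves Baire-one (a characteristic function $\chi_A$ is Baire-one iff $A$ is both $F_\sigma$ and $G_\delta$). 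I would organize the final step as: (1) reduce to bounded $f$; (2) using countable pseudocharacter, write $f$ as a pointwise limit of a uniformly bounded sequence of Baire-one functions would already give $f \in B_1(X)$ outright — so instead one argues that the \emph{failure} of this is exactly what a non-quasi-complete witness looks like; (3) conclude. I expect the delicate point to be item (2): showing that $B_1(X) \cap B$ is not closed in $\IR^X$ precisely when $B_1(X) \subsetneq \IR^X$, which is where the structure theory of Baire-one functions on spaces of countable pseudocharacter (and the characterization of $Q_f$-spaces) does the work.
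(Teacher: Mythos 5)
First, a point of comparison: the paper does not actually prove Theorem \ref{t:B1-Baire-complete} --- it is quoted from \cite{BG-Baire-lcs} without proof. The closest in-paper material is Lemma \ref{l:B1-qc} together with Theorem \ref{t:B1-quasi-complete}, where the authors (a) obtain the equivalence of (a)--(d) and (f) from the barrelledness of $B_1(X)$ and the fact that it carries its weak topology, via \cite[Proposition~11.4.2]{Jar}, and (b) show that quasi-completeness forces countable pseudocharacter by producing a Cauchy net $\{f_U\}$ inside the closed bounded set $B=\{f\in B_1(X):|f(x)|\le 1\}$ whose only possible cluster point is $\mathbf{1}_{\{z\}}$. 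Your skeleton (soft locally convex implications reducing everything to (f)$\Ra$(g)) is consistent with that, and the argument in your middle paragraph for (f)$\Ra$(g) is in fact correct and complete once written out: $B\cap B_1(X)$ is closed and bounded in $B_1(X)$, hence complete under quasi-completeness, hence closed in $\IR^X$; its closure in $\IR^X$ is all of $\{f\in\IR^X:|f|\le 1\}$ because finitely many prescribed values in $[-1,1]$ at distinct points of a Tychonoff space are interpolated by a continuous function bounded by $1$; so every bounded function is Baire-one, and the $\arctan$/$\tan$ conjugation gives $B_1(X)=\IR^X$. This is the same mechanism as Lemma \ref{l:B1-qc} pushed to its conclusion, and notably it never uses countable pseudocharacter --- consistent with the paper's Theorem \ref{t:B1-quasi-complete}, which drops that hypothesis.

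The genuine problem is your final paragraph, which walks back this correct argument. First, the assertion that writing $f$ as a pointwise limit of a uniformly bounded sequence of Baire-one functions ``would already give $f\in B_1(X)$ outright'' is false: such a limit is only Baire-two, and indeed $B_1(X)=B_2(X)$ is exactly the nontrivial condition characterized in Theorem \ref{t:B1-lc}. Second, countable pseudocharacter and Baire-one indicator functions play no role in the density step; the density of $B\cap C_p(X)$ in $[-1,1]^X$ needs only complete regularity plus truncation, which you had already observed. Third, the step you single out as ``delicate'' --- that $B\cap B_1(X)$ fails to be closed in $\IR^X$ precisely when $B_1(X)\subsetneq\IR^X$ --- is immediate from that density, and deferring it to ``the characterization of $Q_f$-spaces'' is circular, since a $Q_f$-space is by definition (Proposition \ref{p:Qf-def}) a space with $B_1(X)=\IR^X$. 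Delete that paragraph, state the density claim explicitly, and your proof closes.
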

Sequential completeness and  local completeness of spaces $B_1(X)$ were not studied at all.
The main purpose of this note is to fill this gap.

Now we describe the content of the article.
In Section \ref{sec:CZ} we introduce  two new class of topological spaces. The first one is the class of $CZ$-spaces which characterize sequential completeness  and local completeness of $B_1(X)$. This class includes all functionally countable spaces (Theorem \ref{t:cz-fc}). In Theorem \ref{t:cz-compact} we show that a compact space $X$ is a $CZ$-space if, and only if, $X$ is scattered.
The second class is the class of $Q_f$-spaces which characterizes reflexivity type properties of $B_1(X)$. In Proposition \ref{p:B1-pc} we prove that a pseudocompact space $X$ is a $Q_f$-space if, and only if, $X$ is a countable metrizable compact space.

The main results are proved in Section \ref{sec:main}. In Theorem \ref{t:B1-quasi-complete} we characterize spaces $X$ for which $B_1(X)$ satisfy one of the conditions in Theorem \ref{t:B1-Baire-complete}. Sequentially complete and locally complete spaces $B_1(X)$ are characterized in Theorem \ref{t:B1-lc}. Using these theorems   we show in Corollary \ref{c:B1-lc-pseudo} that for a compact space $K$, the space $B_1(K)$ is locally complete if, and only if, $B_1(K)$ is sequentially complete  if, and only if,  $K$ is scattered. It easily follows that the space $B_1\big([0,\w_1]\big)$ is sequentially complete but not quasi-complete, see Example \ref{exa:B1-sc-not-qc}. In Corollary \ref{c:B1-lc-Choquet} we show that if $B_1(X)$ is locally complete, then it is a Choquet space (hence, Baire). But the converse is not true in general, see Example \ref{exa:B1-Choquet-not-lc}.

Being motivated by the aforementioned results and articles, we separate the case when $X$ is a separable metric space. This case is considered in the last Section \ref{sec:B1-sms}. Let $X$ be a separable metrizable space. In Theorem \ref{t:B1-sms} we show that completeness, reflexivity and Baire type properties of $B_1(X)$ are equivalent to one of the  ``thing'' properties of the space $X$ as being a countable space, a $Q$-set, a $\sigma$-set, a $\lambda$-set or a $\kappa$-sets. The last class of $\kappa$-sets is a new one. It should be emphasized that these types of spaces are one of the basic objects for studying in descriptive set theory, general topology and measure theory, see for example the books \cite{Bukovsky2011,Kechris,Kuratowski,Miller2017} and the influential articles \cite{vD,Miller}. This study is closely related to several classical small cardinals. By this reason  in Remark \ref{rem:small-sms} we discuss the relationships between small cardinals and thing sets, and, using Theorem \ref{t:B1-sms}, we show how small cardinals and set-theoretic axioms influence on completeness, reflexivity and Baire type properties of $B_1(X)$. Additional examples are given in Examples \ref{exa:gamma-neq-kappa}, \ref{exa:gamma-neq-sigma} and \ref{exa:Q-neq-sigma}. The obtained results for thing spaces and for the properties of $B_1(X)$ are summarized in Theorem \ref{t:sms} and Theorem \ref{t:sms-B1}, respectively.



\section{$CZ$-spaces and $Q_f$-spaces} \label{sec:CZ}


In this section we introduce two new class of topological spaces which play an essential role in our article. We start from the recalling some basic notions.

Let $X$ be a space. A subset $A$ of $X$ is a {\em zero-set} if there is $f\in C(X)$ such that $A=f^{-1}(0)$. A subset $B$ of $X$ is called a {\em cozero-set} (or {\em functionally open}) if $B=X\SM A$ for some zero-set $A\subseteq X$.
A countable union of zero sets is called a {\em $Zer_{\sigma}$-set} and a countable intersection of cozero set is called a {\em $Coz_{\delta}$-set}.
It is obvious that the complement to a $Coz_{\delta}$-set is a $Zer_{\sigma}$-set, and vice versa.  It is easy to see that the family of $Zer_{\sigma}$-sets is closed under taking countable unions and finite intersections.
Observe also that any zero set is a $Coz_{\delta}$-set, and each cozero-set is a $Zer_{\sigma}$-set.
A subset $A$ of $X$ is called a {\em $CZ$-set} if $A$ is a $Zer_{\sigma}$-set and $Coz_{\delta}$-set at the same time.
The next class of spaces will be important in the article.

\begin{definition} \label{def:CZ-space} {\em
A space $X$ is called a {\em $CZ$-space}  if any $Zer_{\sigma}$-set of $X$ is a $Coz_{\delta}$-set, i.e.,
any $Zer_{\sigma}$-set of $X$ is a $CZ$-set.}
\end{definition}

For an important class of spaces including all metrizable spaces, one can reformulate the property of being a $CZ$-space in a more convenient form. Recall that a space $X$ is  {\em perfectly normal} if $X$ is normal and any closed subset of $X$ is a $G_\delta$-set. A space $X$ is perfectly normal if, and only if, any closed subset of $X$ is a zero set. In such spaces it is clear that the family $Zer_\sigma(X)$ ($Coz_\delta(X)$) coincides with the class of all $F_{\sigma}$ sets (resp., $G_{\delta}$ sets). Therefore we have the following assertion.

\begin{proposition} \label{p:pn-CZ=sigma}
A perfect normal space $X$ is a $CZ$-space if, and only if, every $F_{\sigma}$ subset of $X$ is $G_{\delta}$. 
\end{proposition}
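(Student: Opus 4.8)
The plan is to reduce the statement to the elementary observation, already recorded in the paragraph preceding the proposition, that in a perfectly normal space the zero sets are precisely the closed sets. First I would recall that for \emph{any} space $X$ a zero set $f^{-1}(0)$ is closed, since $f$ is continuous; conversely, perfect normality supplies, for each closed $F\subseteq X$, a function $f\in C(X)$ with $F=f^{-1}(0)$. Hence the zero sets of $X$ coincide with its closed sets and, passing to complements, the cozero sets coincide with the open sets.

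Next I would move to the $\sigma$- and $\delta$-levels. A $Zer_{\sigma}$-set is by definition a countable union of zero sets; by the previous step this is exactly a countable union of closed sets, that is, an $F_{\sigma}$ subset of $X$, so $Zer_{\sigma}(X)=F_{\sigma}(X)$. Dually, a $Coz_{\delta}$-set is a countable intersection of cozero sets, hence a countable intersection of open sets, that is, a $G_{\delta}$ subset of $X$, so $Coz_{\delta}(X)=G_{\delta}(X)$.

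Finally I would invoke Definition \ref{def:CZ-space}: $X$ is a $CZ$-space precisely when every $Zer_{\sigma}$-set of $X$ is a $Coz_{\delta}$-set. Substituting the two identifications just obtained, this says exactly that every $F_{\sigma}$ subset of $X$ is a $G_{\delta}$ subset of $X$, which is the asserted equivalence.

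I do not expect any genuine obstacle here: the only point that is an actual (and standard) theorem rather than an unwinding of definitions is the equivalence ``$X$ is perfectly normal $\iff$ every closed subset of $X$ is a zero set'', and this is already stated in the text immediately before the proposition, so it may simply be cited. Everything else is a translation of vocabulary together with the trivial fact that complementation interchanges countable unions with countable intersections.
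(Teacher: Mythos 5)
Your argument is correct and is precisely the one the paper uses: the preceding paragraph identifies zero sets with closed sets in a perfectly normal space, concludes $Zer_\sigma(X)=F_\sigma(X)$ and $Coz_\delta(X)=G_\delta(X)$, and the proposition then follows by substituting into Definition \ref{def:CZ-space}. Nothing is missing.
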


We note also the following statement. Recall that a space $X$ is called a {\em $\lambda$-space} if every countable subset of $X$ is $G_{\delta}$.
\begin{proposition} \label{p:CZ-cp=>lambda}
If $X$ is a $CZ$-space of countable pseudocharacter, then $X$ is a $\lambda$-space.
\end{proposition}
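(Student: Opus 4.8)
The plan is to deduce the $\lambda$-space property directly from the $CZ$-property, using countable pseudocharacter only in order to represent singletons as zero-sets.

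First I would record the well-known fact that in a Tychonoff space a point $x$ is a $G_{\delta}$-point precisely when $\{x\}$ is a zero-set. The only nontrivial direction: if $\{x\}=\bigcap_{n\in\NN}U_n$ with each $U_n$ open, then $x\in U_n$ for every $n$, so by complete regularity pick functions $f_n\in C(X)$ with $0\le f_n\le 1$, $f_n(x)=0$ and $f_n\equiv 1$ on $X\SM U_n$, and set $f=\sum_{n\in\NN}2^{-n}f_n$. Then $f\in C(X)$ (uniform convergence), $f(x)=0$, and $f(y)>0$ whenever $y\ne x$ (since then $y\notin U_n$ for some $n$, whence $f_n(y)=1$), so $\{x\}=f^{-1}(0)$. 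Consequently, in a space of countable pseudocharacter every singleton is a zero-set.

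Next, given any countable subset $A=\{x_n:n\in\NN\}\subseteq X$, I would write $A=\bigcup_{n\in\NN}\{x_n\}$ as a countable union of zero-sets, so that $A$ is a $Zer_{\sigma}$-set. Since $X$ is a $CZ$-space, $A$ is then a $Coz_{\delta}$-set, i.e.\ a countable intersection of cozero-sets; in particular $A$ is a $G_{\delta}$-subset of $X$. As $A$ was an arbitrary countable set, $X$ is a $\lambda$-space, as claimed.

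I do not anticipate a genuine obstacle: the statement reduces to the elementary lemma of the first step, which is the only place where the Tychonoff hypothesis is used, and thereafter the $CZ$-hypothesis is applied verbatim. The only point needing (routine) care is verifying that $\sum_{n\in\NN}2^{-n}f_n$ is continuous and vanishes nowhere except at $x$.
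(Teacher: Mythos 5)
Your proof is correct and follows essentially the same route as the paper: write a countable set as a countable union of singletons, show each singleton is a $Zer_\sigma$-set, and apply the $CZ$-hypothesis to conclude the countable set is a $Coz_\delta$-set and hence $G_\delta$. The only (harmless) difference is in how singletons are handled: you prove directly that a $G_\delta$-point of a Tychonoff space is a zero-set via the series $\sum_n 2^{-n}f_n$, whereas the paper merely notes that complete regularity makes each singleton a $Coz_\delta$-set and then invokes the $CZ$-property a second time to turn it into a $Zer_\sigma$-set.
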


\begin{proof}
Let $A=\{a_n\}_{n\in\w}$ be a countable subset of $X$. Since $X$ has countable pseudocharacter, every point $x\in X$ is a $Coz_\delta$-set. As $X$ is a $CZ$-space, $\{x\}$ is a  $Zer_\sigma$-set. Hence $A$ is a $Zer_\sigma$-set, too. Since $X$ is a $CZ$-space, $A$ is a $Coz_\delta$-set. In particular, $A$ is $G_{\delta}$. Thus $X$ is a $\lambda$-space.\qed
\end{proof}

To characterize pseudocompact spaces which are $CZ$-spaces we need the following notions. Recall that a subspace $Y$ of a space $X$ is called
\begin{itemize}
\item {\em $C$-embedded in $X$} if every function $f\in C(Y)$ can be extended to $\bar f\in C(X)$;
\item {\em $C^\ast$-embedded in $X$} if every bounded function $f\in C(Y)$ can be extended to a bounded function $\bar f\in C(X)$;
\item {\em $G_\delta$-dense in $X$} if $Y$ has nonempty intersection with any nonempty $G_\delta$-set in $X$;
\item {\em $z$-embedded} in $X$ if, for every zero set $A$ in $Y$, there exists a zero set $B$ in $X$ such that $B \cap Y = A$.
\end{itemize}
The notion of $z$-embedded subspaces was introduced and studied in \cite{Bl-H}.


Below we give some sufficient conditions on a subspace to be $z$-embedded.
\begin{lemma} \label{l:C-embed-z-embed}
Let $Y$ be a subspace of a space $X$.
\begin{enumerate}
\item[{\rm(i)}] If $Y$ is $C^\ast$-embedded in $X$, then $Y$ is $z$-embedded in $X$.
\item[{\rm(ii)}]  If $Y$ is dense, then $Y$ is $C$-embedded in $X$ if, and only if, $Y$ is $z$-embedded and $G_\delta$-dense in $X$.
\end{enumerate}
\end{lemma}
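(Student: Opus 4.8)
The plan is to treat the two items separately. For item (i) I would use a truncation trick: if $A=f^{-1}(0)$ is a zero-set of $Y$ with $f\in C(Y)$, replace $f$ by the bounded function $g:=\min\{|f|,1\}\in C(Y)$, which still satisfies $g^{-1}(0)=A$, invoke $C^\ast$-embeddedness to extend $g$ to a bounded $\bar g\in C(X)$, and set $B:=\bar g^{-1}(0)$; since $\bar g$ restricts to $g$ on $Y$, the zero-set $B$ of $X$ satisfies $B\cap Y=g^{-1}(0)=A$, so $Y$ is $z$-embedded.

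For the forward implication of (ii), I would first observe that a $C$-embedded subspace is $C^\ast$-embedded — truncate any continuous extension of a bounded function into an interval containing its range — so by (i) it is $z$-embedded. To obtain $G_\delta$-density I would show that $Y$ meets every nonempty zero-set of $X$, which is enough because in a Tychonoff space every nonempty $G_\delta$-set contains a nonempty zero-set (fix a point, shrink each of the countably many open sets to a zero-set neighborhood of it, and intersect). If some nonempty zero-set $Z=g^{-1}(0)$ of $X$ missed $Y$, then $g$ would be nonvanishing on $Y$, so $1/(g|_Y)\in C(Y)$; extending it to $h\in C(X)$ would give $g h\equiv 1$ on the dense set $Y$, hence on all of $X$, contradicting $g\equiv 0$ on the nonempty set $Z$.

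For the reverse implication I would assume $Y$ dense, $z$-embedded and $G_\delta$-dense, and use the key observation that any zero-set of $X$ disjoint from $Y$ is empty (being a $G_\delta$-set disjoint from the $G_\delta$-dense set $Y$). From this I would first deduce that $Y$ is $C^\ast$-embedded: given disjoint zero-sets $Z_1,Z_2$ of $Y$, lift them by $z$-embeddedness to zero-sets $B_1,B_2$ of $X$ with $B_i\cap Y=Z_i$; then $B_1\cap B_2$ is a zero-set of $X$ meeting $Y$ in $Z_1\cap Z_2=\varnothing$, so $B_1\cap B_2=\varnothing$, whence $\mathrm{cl}_X(Z_1)$ and $\mathrm{cl}_X(Z_2)$ are disjoint, and the classical criterion that a dense subspace is $C^\ast$-embedded exactly when disjoint zero-sets of it have disjoint closures in the ambient space applies. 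Finally I would bootstrap to $C$-embeddedness: given $f\in C(Y)$, write $f=f_+-f_-$ with $f_\pm:=\max\{\pm f,0\}\in C(Y)$ nonnegative, so it suffices to extend a nonnegative $f$; then $1/(1+f)$ takes values in $(0,1]$, so a $C^\ast$-extension truncated into $[0,1]$ yields $\bar g\in C(X)$ with $\bar g|_Y=1/(1+f)$, and $\bar g^{-1}(0)$ is then a zero-set of $X$ disjoint from $Y$, hence empty, so $\bar g>0$ on $X$ and $1/\bar g-1\in C(X)$ is the required extension of $f$.

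I do not expect a genuine obstacle: everything is elementary except the classical Gillman--Jerison characterization of $C^\ast$-embeddedness of a dense subspace through disjoint closures of zero-sets, and even that could be avoided by a direct Urysohn-type construction of the $C^\ast$-extension, using that the lifted zero-sets $B_1,B_2$ above are not only disjoint but completely separated in $X$.
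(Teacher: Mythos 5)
Your proof of (i) is the same as the paper's (the paper tacitly uses the fact that every zero-set is the zero-set of a bounded function, which your truncation $g=\min\{|f|,1\}$ makes explicit), but your treatment of (ii) is genuinely different: where the paper outsources both directions to the literature --- citing \cite[Theorem 6.1.4]{ArT} for ``$C$-embedded $\Rightarrow$ $G_\delta$-dense'' and the Blair--Hager theorem \cite[Corollary 3.6.B]{Bl-H} for the converse --- you prove everything from first principles. Your forward direction (a nonempty zero-set $Z=g^{-1}(0)$ missing $Y$ would make $1/(g|_Y)$ extendable, forcing $gh\equiv 1$ on the dense set $Y$ and hence on $Z$) is the standard well-embeddedness argument, and your reduction of $G_\delta$-density to meeting zero-sets is correct since every nonempty $G_\delta$ in a Tychonoff space contains a nonempty zero-set. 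Your converse replaces Blair--Hager by the Urysohn extension theorem plus the bootstrap $f\mapsto 1/(1+f)$; this is exactly the content of the Blair--Hager criterion in the degenerate case where no nonempty zero-set is disjoint from $Y$, so the two arguments are morally the same, yours being self-contained and the paper's shorter. One caveat: the ``classical criterion'' you invoke for $C^\ast$-embeddedness of a dense subspace is complete separation of disjoint zero-sets of $Y$ in $X$ (Urysohn's extension theorem), not merely disjointness of their closures in $X$ --- the disjoint-closure form is not a correct criterion in general. You repair this yourself in your final sentence by observing that the lifted sets $B_1,B_2$ are disjoint zero-sets of $X$ and hence automatically completely separated there (via $g_1/(g_1+g_2)$), so the argument stands; just state the criterion in the complete-separation form.
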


\begin{proof}
(i) Let $A=f^{-1}(0)$ be a zero subset of $Y$, where $f\in C(Y)$ is bounded. Since $Y$ is $C^\ast$-embedded in $X$, there is a bounded $\bar f\in C(X)$ such that $\bar f{\restriction}_Y=f$. Set $B:={\bar f}^{-1}(0)$. Then $B$ is a zero set in $X$ such that $B \cap Y = A$.
\smallskip

(ii) Assume that $Y$ is $C$-embedded in $X$. Then $Y$, being also $C^\ast$-embedded, is $z$-embedded in $X$ by (i).
Then Theorem 6.1.4 of \cite{ArT} implies that $Y$ is $G_\delta$-dense in $X$.



Conversely, assume that $Y$ is $z$-embedded and $G_\delta$-dense in $X$. Sets $A,B\subset X$ are {\em completely separated} in $X$ if there exists a continuous function $f$ on $X$ such that $f(A)=\{0\}$ and $f(B)=\{1\}$. The Blair--Hager theorem (\cite[Corollary 3.6.B]{Bl-H}, see also \cite[Theorem 9.9.36]{ArT}) implies that $Y$ is $C$-embedded in $X$ if and only if $Y$ is completely separated in $X$ from every nonempty zero-set disjoint from it. Since $Y$ is $G_\delta$-dense in $X$, there are no nonempty zero sets disjoint from $Y$. Thus, by the Blair--Hager theorem, $Y$ is $C$-embedded in $X$.\qed
\end{proof}

\begin{proposition}\label{p:zem}
Let $Y$ be a $z$-embedded  subspace  in a space $X$. If $M\subseteq Y$  is a $Coz_{\delta}$-set {\rm(}$Zer_{\sigma}$-set{\rm)}, then there exists a $Coz_{\delta}$-set {\rm(}resp., a $Zer_{\sigma}$-set{\rm)} $L$ in $X$ such that $L \cap Y = M$. Consequently, if $X$ is a $CZ$-space, then also $Y$ is a $CZ$-space.
\end{proposition}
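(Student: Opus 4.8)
The plan is to prove the statement directly from the definition of $z$-embedded, treating first the case of a single cozero-set, then bootstrapping to $Coz_\delta$-sets, and finally passing to complements for the $Zer_\sigma$ case.

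First I would handle the base case: if $C$ is a cozero-set in $Y$, then $C = Y \SM Z$ for a zero-set $Z$ in $Y$, and since $Y$ is $z$-embedded in $X$ there is a zero-set $W$ in $X$ with $W \cap Y = Z$; then $U := X \SM W$ is a cozero-set in $X$ with $U \cap Y = C$. This is the only place where $z$-embeddedness is used, and it is essentially immediate. Now suppose $M \subseteq Y$ is a $Coz_\delta$-set, say $M = \bigcap_{n\in\w} C_n$ where each $C_n$ is a cozero-set in $Y$. For each $n$ pick a cozero-set $U_n$ in $X$ with $U_n \cap Y = C_n$, and set $L := \bigcap_{n\in\w} U_n$. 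Then $L$ is a $Coz_\delta$-set in $X$, and $L \cap Y = \bigcap_n (U_n \cap Y) = \bigcap_n C_n = M$, as desired. The $Zer_\sigma$ case follows by taking complements: if $M \subseteq Y$ is a $Zer_\sigma$-set, then $Y \SM M$ is a $Coz_\delta$-set in $Y$, so there is a $Coz_\delta$-set $L'$ in $X$ with $L' \cap Y = Y \SM M$; put $L := X \SM L'$, which is a $Zer_\sigma$-set in $X$, and check $L \cap Y = Y \SM (L' \cap Y) = Y \SM (Y \SM M) = M$.

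For the final ``Consequently'' clause, suppose $X$ is a $CZ$-space and let $M \subseteq Y$ be a $Zer_\sigma$-set; I must show $M$ is a $Coz_\delta$-set in $Y$. By the already-proved transfer there is a $Zer_\sigma$-set $L$ in $X$ with $L \cap Y = M$. Since $X$ is a $CZ$-space, $L$ is also a $Coz_\delta$-set in $X$, say $L = \bigcap_n V_n$ with $V_n$ cozero in $X$. Then $M = L \cap Y = \bigcap_n (V_n \cap Y)$, and each $V_n \cap Y$ is a cozero-set in $Y$ (the restriction of a continuous function stays continuous), so $M$ is a $Coz_\delta$-set in $Y$. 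Hence every $Zer_\sigma$-set of $Y$ is a $Coz_\delta$-set, i.e., $Y$ is a $CZ$-space.

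There is no real obstacle here; the statement is a routine ``pullback/pushforward of descriptive complexity along a $z$-embedding'' argument. The only point requiring minimal care is bookkeeping with complements in the $Zer_\sigma$ case and noting that trace of a cozero-set along a subspace is again a cozero-set, which is trivial. I would write the proof compactly, doing the cozero base case and the $Coz_\delta$ case explicitly and then saying the $Zer_\sigma$ case follows ``by passing to complements.''
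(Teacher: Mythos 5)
Your proof is correct and follows essentially the same route as the paper: transfer the countably many building blocks one at a time using the $z$-embedding and then take the union/intersection, handling the dual case by complementation (the paper does the $Zer_\sigma$ case directly and invokes symmetry for $Coz_\delta$, whereas you do the reverse — an immaterial difference). You also spell out the routine ``Consequently'' step, which the paper leaves to the reader.
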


\begin{proof}
By the symmetry between $Coz_{\delta}$-sets and $Zer_{\sigma}$-sets, it suffices to prove only the case of $Zer_{\sigma}$-sets. Let $M=\bigcup_{n\in\w} A_n$ be a $Zer_{\sigma}$-set in $Y$, where all $A_n$ are zero sets in $Y$. Since $Y$ is $z$-embedded, for every $n\in\w$, there is a continuous function $f_n:X\to[0,1]$ such that the set $B_n:=f_n^{-1}(0)$ satisfies the equality $A_n=Y\cap B_n$. Set $L:=\bigcup_{n\in\w} B_n$. Then $L$ is a $Zer_{\sigma}$-set in $X$ such that $L\cap Y=\bigcup_{n\in\w} (B_n\cap Y)=\bigcup_{n\in\w} A_n=M$, as desired.\qed
\end{proof}

If $Y$ is dense and $C$-embedded, also the converse in Proposition \ref{p:zem} is true.

\begin{proposition} \label{l:zcembed}
Let  $Y$ be a dense $C$-embedded subspace of a space $X$. Then the space $Y$ is a $CZ$-space if, and only if, $X$ is a $CZ$-space.
\end{proposition}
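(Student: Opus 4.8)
The plan is to prove the two implications separately. The implication ``$X$ is a $CZ$-space $\Rightarrow$ $Y$ is a $CZ$-space'' is essentially immediate from what is already available: since $Y$ is dense and $C$-embedded it is in particular $C^\ast$-embedded, hence $z$-embedded in $X$ by Lemma~\ref{l:C-embed-z-embed}(i), and Proposition~\ref{p:zem} then gives exactly that $Y$ is a $CZ$-space. So the real content is the converse.

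For the converse, assume $Y$ is a $CZ$-space and fix a $Zer_{\sigma}$-set $F$ in $X$; the goal is to show that $F$ is a $Coz_{\delta}$-set in $X$. First I would observe that $F\cap Y$ is a $Zer_{\sigma}$-set in $Y$, so by hypothesis on $Y$ it is a $Coz_{\delta}$-set in $Y$. Since $Y$ is $z$-embedded (again via Lemma~\ref{l:C-embed-z-embed}(i)), Proposition~\ref{p:zem} produces a $Coz_{\delta}$-set $L$ in $X$ with $L\cap Y=F\cap Y$. At this point it remains to promote this equality of traces on $Y$ to the genuine equality $L=F$; once that is done, $F=L$ is a $Coz_{\delta}$-set and we are finished.

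The upgrade step is the heart of the argument, and it is where density of $Y$ enters decisively. By Lemma~\ref{l:C-embed-z-embed}(ii) a dense $C$-embedded subspace is $G_\delta$-dense in $X$. Consequently every zero set of $X$ disjoint from $Y$ is empty (zero sets are $G_\delta$), hence every $Zer_{\sigma}$-set of $X$ disjoint from $Y$ is empty, and likewise every $Coz_{\delta}$-set of $X$ disjoint from $Y$ is empty (a $Coz_{\delta}$-set is a $G_\delta$-set). Now I would examine the two halves of the symmetric difference of $L$ and $F$, both of which miss $Y$ entirely: $L\setminus F=L\cap(X\SM F)$ is an intersection of two $Coz_{\delta}$-sets, hence a $Coz_{\delta}$-set disjoint from $Y$, hence empty; while $F\setminus L=F\cap(X\SM L)$ is an intersection of two $Zer_{\sigma}$-sets, hence a $Zer_{\sigma}$-set disjoint from $Y$, hence empty. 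Therefore $L=F$, which completes the proof.

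The only genuine obstacle is this passage from ``$L$ and $F$ agree on the dense subspace $Y$'' to ``$L=F$'': this is false for arbitrary closed or $F_\sigma$ sets and really requires both the $Coz_{\delta}$/$Zer_{\sigma}$ structure of the two sets and the $G_\delta$-density of $Y$. Everything else reduces to routine bookkeeping with the closure properties of the families $Zer_{\sigma}(X)$ and $Coz_{\delta}(X)$ recorded at the beginning of the section (closure of $Zer_{\sigma}$ under countable unions and finite intersections, and dually of $Coz_{\delta}$ under countable intersections and finite unions).
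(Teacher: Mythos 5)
Your proof is correct and follows essentially the same route as the paper: both directions rest on Proposition~\ref{p:zem} together with the $G_\delta$-density of $Y$ from Lemma~\ref{l:C-embed-z-embed}(ii), and the key upgrade from trace-equality on $Y$ to genuine equality is the same mechanism the paper uses (the disagreement set splits into a $Coz_{\delta}$ piece and a $Zer_{\sigma}$ piece, each a union of nonempty $G_\delta$-sets missing $Y$ unless empty). The only cosmetic difference is that you lift $F\cap Y$ directly as a $Coz_{\delta}$-set and argue via the symmetric difference $L\,\triangle\, F$, whereas the paper lifts the complement $Y\setminus M'$ as a $Zer_{\sigma}$-set and argues by contradiction at a point; your packaging is, if anything, slightly cleaner.
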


\begin{proof}
Taking into account (ii) of Lemma \ref{l:C-embed-z-embed}, the sufficiency follows from Proposition \ref{p:zem}.
\smallskip

To prove the necessity, assume that $Y$ is a $CZ$-space. Fix an arbitrary $Zer_{\sigma}$-set  $M$ in $X$. Then $M':=M\cap Y$ is a $Zer_{\sigma}$-set in $Y$. Set $C':=Y\setminus M'$. Since $Y$ is a $CZ$-space, the set $C'$ is a $Zer_{\sigma}$-set in $Y$. Hence, by Proposition \ref{p:zem},  there exists a $Zer_{\sigma}$-set $C$ in $X$ such that $C'=C\cap Y$.

It suffices to prove that $C=X\setminus M$. Suppose for a contradiction that the set
\[
D:=(M\cap C)\cup \big(X\setminus (M\cup C)\big)
\]
is not empty. Fix a point $x\in D$. We claim that there is a $G_\delta$-set $G$ in $X$ such that $x\in G\subseteq D$. Indeed, assume that $x\in M\cap C$. Since $M$ and $C$ and hence also $M\cap C$ are $Zer_{\sigma}$-sets in $X$, there is a zero set $G\subseteq M\cap C\subseteq D$ in $X$ such that $x\in G$. Observe that $G$ is also a $G_\delta$-set in $X$. Assume that $x\in X\setminus (M\cup C)$. Since $M\cap C$ is a  $Zer_{\sigma}$-set, we obtain that $x\in G:=X\setminus (M\cup C)$ is a $G_\delta$-set in $X$. This proves the claim.

Since $Y$ is dense and $C$-embedded, (ii) of Lemma \ref{l:C-embed-z-embed} implies that $Y$ is $G_\delta$-dense in $X$. Therefore,  we have $G\cap Y\neq\emptyset$. Since $G\subseteq D$ we obtain
\[
Y\cap D=(M'\cap C')\cup \big(Y\setminus (M'\cup C')\big)\neq\emptyset,
\]
that is, $C'\neq Y\setminus M'$. This contradiction finishes the proof.\qed
\end{proof}

Since any space $X$ is dense and $C$-embedded in its realcompactification $\upsilon X$, Proposition \ref{l:zcembed} implies the following assertion.

\begin{proposition}\label{p:cz-nu}
A space $X$ is a $CZ$-space if, and only if,  its realcompactification $\upsilon X$ is a $CZ$-space.
\end{proposition}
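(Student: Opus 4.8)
The plan is to obtain this as an immediate corollary of Proposition \ref{l:zcembed}. The key observation is the standard fact from the theory of realcompactifications (see e.g.\ Gillman--Jerison or Engelking): the Hewitt realcompactification $\upsilon X$ is, up to a homeomorphism fixing $X$ pointwise, the unique realcompact space in which $X$ sits as a dense $C$-embedded subspace. In particular, $X$ is dense and $C$-embedded in $\upsilon X$. Once this is recorded, I would simply apply Proposition \ref{l:zcembed} to the dense $C$-embedded subspace $Y:=X$ of the ambient space $\upsilon X$, which yields at once that $X$ is a $CZ$-space if and only if $\upsilon X$ is a $CZ$-space.

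The only step meriting a word of justification is the claim that $X$ is $C$-embedded in $\upsilon X$. This is part of the basic construction of $\upsilon X$ (as the intersection of the realcompact ``core'' of $\IR^{C(X)}$ with the closure of the evaluation image of $X$): every $f\in C(X)$ corresponds to a coordinate projection, which is continuous on all of $\IR^{C(X)}$ and hence restricts to a continuous extension of $f$ over $\overline{X}\subseteq\IR^{C(X)}$. Density of $X$ in $\upsilon X$ is built into the construction as well. With both facts in hand, no further argument is required.

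If one preferred a self-contained derivation bypassing Proposition \ref{l:zcembed}, one could instead run the two implications directly: for sufficiency, pull a $Zer_\sigma$-set of $X$ up to a $Zer_\sigma$-set of $\upsilon X$ via Proposition \ref{p:zem}, use that $\upsilon X$ is a $CZ$-space to find a $Coz_\delta$ witness there, and intersect back with $X$; for necessity, transfer a $Zer_\sigma$-set $M$ of $\upsilon X$ and the complement of $M\cap X$ in $X$ back and forth using Proposition \ref{p:zem} together with the $G_\delta$-density of $X$ in $\upsilon X$ guaranteed by Lemma \ref{l:C-embed-z-embed}(ii). But this merely reproves the relevant instance of Proposition \ref{l:zcembed}, so invoking that proposition is the cleaner route. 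There is essentially no obstacle in this proof; the entire substance lies in Proposition \ref{l:zcembed}, which is already established.
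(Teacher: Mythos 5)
Your proof is correct and follows exactly the paper's route: the paper likewise derives this proposition in one line from Proposition \ref{l:zcembed}, using the standard fact that $X$ is dense and $C$-embedded in $\upsilon X$. No issues.
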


In the next theorem we show that the widely studied class of functionally countable spaces belongs to the class of $CZ$-spaces. Recall that a space $X$ is called {\em functionally countable} if  any second countable continuous image of $X$ is countable. The class of functionally countable spaces is sufficiently large, it contains all ordinals, $\sigma$-products of Cantor cubes, and Lindel\"{o}f scattered spaces. It is well-known that a compact space is functionally countable if, and only if, it is scattered.  A space $X$ is {\em scattered} if every nonempty subspace of $X$ has an isolated point.

\begin{theorem}\label{t:cz-fc}
A functionally countable space $X$ is a $CZ$-space.
\end{theorem}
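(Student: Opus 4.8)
The plan is to show that every $Zer_\sigma$-set in a functionally countable space $X$ is in fact a $Coz_\delta$-set, using the defining property: any second countable continuous image of $X$ is countable. First I would fix a $Zer_\sigma$-set $M=\bigcup_{n\in\w}A_n$ with each $A_n=f_n^{-1}(0)$ for some $f_n\in C(X,[0,1])$. The natural move is to bundle the countably many functions into a single continuous map $f=(f_n)_{n\in\w}\colon X\to[0,1]^\w$, whose target is second countable. By functional countability, the image $f(X)$ is a countable (second countable, hence metrizable) subspace of $[0,1]^\w$. The key observation is that $M=f^{-1}\big(f(M)\big)$ where $f(M)$ is a subset of the countable metrizable space $f(X)$, and the set $Z_n:=\{y\in[0,1]^\w:y_n=0\}$ is a zero-set of $[0,1]^\w$ with $A_n=f^{-1}(Z_n)$, so $M=f^{-1}\big(\bigcup_n Z_n\big)$, an $F_\sigma$ (indeed $Zer_\sigma$) preimage.

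Now the crux: since $f(X)$ is countable and metrizable, it is perfectly normal, so its $F_\sigma$ subset $\bigcup_n(Z_n\cap f(X))=f(M)$ is also a $G_\delta$ in $f(X)$; moreover, being metrizable and countable, $f(X)$ is a $\lambda$-space, but more is true — every subset of a countable metrizable space is both $F_\sigma$ and $G_\delta$, hence a $CZ$-set, in that subspace. Therefore $f(M)$ is a $Coz_\delta$-set in $f(X)$, say $f(M)=\bigcup$… no — $f(M)=\bigcap_{k\in\w}U_k$ with each $U_k$ cozero in $f(X)$. Then $M=f^{-1}(f(M))=\bigcap_k f^{-1}(U_k)$, and each $f^{-1}(U_k)$ is cozero in $X$ by continuity of $f$; hence $M$ is a $Coz_\delta$-set in $X$, as required. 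To be careful about the identity $M=f^{-1}(f(M))$: this holds precisely because $M$ is a union of full fibers of $f$, which it is, since $M=f^{-1}\big(\bigcup_n Z_n\big)$ is a preimage.

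I would organize the write-up in two short steps: (1) reduce, via the diagonal map into $[0,1]^\w$ and functional countability, to the statement that every subset of a countable metrizable space is a $Coz_\delta$-set there; (2) verify this elementary fact — in a countable metrizable (hence perfectly normal, second countable) space every subset is simultaneously $F_\sigma$ and $G_\delta$ — and pull the $Coz_\delta$ representation back through $f$.

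The main obstacle I anticipate is purely bookkeeping: making sure the bundling map $f$ is chosen so that $M$ really is a union of fibers (i.e. $M=f^{-1}(N)$ for an explicit $N\subseteq[0,1]^\w$), and then correctly transporting the $Coz_\delta$ structure back. Pulling back cozero-sets under a continuous map is automatic, so once the reduction in step (1) is set up cleanly, the rest is routine; no delicate normality or extension argument is needed because the target is metrizable. One should also note that the argument in fact shows the stronger conclusion that every $Zer_\sigma$-set of $X$ is simultaneously a $Zer_\sigma$- and $Coz_\delta$-set, i.e. a $CZ$-set, which is exactly the content of Definition \ref{def:CZ-space}.
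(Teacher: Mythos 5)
Your proof is correct and follows essentially the same route as the paper's: a diagonal map into a second countable cube, functional countability to make the image countable, the observation that every subset of a countable metric space is $Coz_\delta$, and pulling the $Coz_\delta$ representation back through the continuous map. Your justification of the key identity $M=f^{-1}(f(M))$ (namely that $M$ is itself a preimage $f^{-1}\big(\bigcup_{n}Z_n\big)$, hence a union of fibers) is in fact marginally cleaner than the paper's, which instead arranges the zero-sets $A_n$ to be increasing and computes the image $F(A)$ explicitly to reach the same conclusion.
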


\begin{proof}
To show that $X$ is a $CZ$-space, we check that every $Zer_{\sigma}$ subset of $X$ is $Coz_{\delta}$. Let $A=\bigcup_{n\in\w} A_n$ be a  $Zer_{\sigma}$ set in $X$, where $\{A_n\}_{n\in\w}$ is an increasing sequence of zero-sets in $X$. For every $n\in\w$, let $f_n: X\to \IR$ be a continuous function such that $A_n=f^{-1}_n(0)$. Consider the diagonal mapping $F=\triangle f_n: X\to \IR^{\w}$. As $\{A_n\}_{n\in\w}$ is increasing we obtain
\[
F(A)=\big\{(z_0,\dots,z_n,0,0,\dots)\in\IR^\w: n\in\w \mbox{ and } z_0\cdots z_n\not=0\big\} \cup\{(0,0,\dots)\}.
\]
It follows that $A=F^{-1}(F(A))$. Since $X$ is functionally countable, $F(X)$ is countable. Therefore $F(X)\setminus F(A)$ is countable, too. Hence, $F(A)$ is $G_{\delta}$ in $F(X)$. Therefore, by Proposition \ref{p:pn-CZ=sigma}, $F(A)=\bigcap_{n\in\w} U_n$, where all $U_n$ are cozero sets in the countable metric space $F(X)$. It follows that $A=F^{-1}(F(A))=\bigcap_{n\in\w} F^{-1}(U_n)$ is $Coz_{\delta}$.\qed
\end{proof}

Now we characterize compact spaces which are $CZ$-spaces.
\begin{theorem}\label{t:cz-compact}
A compact space $X$ is a $CZ$-space if, and only if, $X$ is scattered.
\end{theorem}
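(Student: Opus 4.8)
The plan is to prove both implications, using the previous results where convenient. For the nontrivial direction---that a compact $CZ$-space must be scattered---I would argue by contraposition: if a compact space $X$ is not scattered, then it contains a nonempty closed subspace $P$ with no isolated points (the ``perfect kernel''), and a standard Cantor-scheme construction produces a continuous surjection $\phi\colon P\to 2^\w$. Since $P$ is compact, hence $C^\ast$-embedded in $X$, Lemma~\ref{l:C-embed-z-embed}(i) gives that $P$ is $z$-embedded in $X$; by Proposition~\ref{p:zem}, to show $X$ is not a $CZ$-space it suffices to show $P$ is not a $CZ$-space. Now I would pull back a $Zer_\sigma$-set of $2^\w$ that is not $Coz_\delta$ (equivalently, an $F_\sigma$ set that is not $G_\delta$, which exists in $2^\w$ by Proposition~\ref{p:pn-CZ=sigma} applied to the perfectly normal space $2^\w$---e.g.\ the countable dense set of eventually-zero sequences, which is $F_\sigma$ but not $G_\delta$ by the Baire category theorem). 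Care is needed here: preimages under $\phi$ of cozero/zero sets are again cozero/zero, so $\phi^{-1}$ of an $F_\sigma$ non-$G_\delta$ set of $2^\w$ is a $Zer_\sigma$-set of $P$; the point requiring a short argument is that it is \emph{not} $Coz_\delta$ in $P$. For this I would use that $\phi$ is a perfect (quotient) surjection: if $\phi^{-1}(S)$ were a $Coz_\delta$-set $\bigcap_n V_n$ in $P$, then since the $V_n$ are unions of fibers only when $S$ is, I instead choose $\phi$ and $S$ so that $\phi^{-1}(S)$ is saturated and argue that $S = \bigcap_n \phi(V_n')$ for suitable cozero sets, contradicting $S\notin Coz_\delta(2^\w)$; alternatively, and more cleanly, pick the non-$G_\delta$ $F_\sigma$ set in $2^\w$ already as a union of fibers of $\phi$ by first collapsing, or just invoke that a countable-to-one... actually the simplest route is to note $2^\w$ embeds as a retract-like subspace and instead apply Proposition~\ref{p:zem} in the form: a $CZ$-space has all its $z$-embedded (in particular, all its compact, hence closed) subspaces $CZ$, and $2^\w$ itself is not a $CZ$-space, and $2^\w$ embeds into $P$ as a closed subspace via $\phi$ composed with a section---but $\phi$ need not have a continuous section, so I would instead directly embed a copy of $2^\w$: every nonempty perfect compact space contains a topological copy of $2^\w$ as a closed subset (Cantor's theorem), and that closed copy is $C^\ast$-embedded, hence $z$-embedded, hence would have to be a $CZ$-space, contradicting that $2^\w$ is not.

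For the converse---every scattered compact space is a $CZ$-space---I would invoke Theorem~\ref{t:cz-fc}: a compact scattered space is functionally countable (stated in the text immediately before Theorem~\ref{t:cz-fc} as a well-known fact), and functionally countable spaces are $CZ$-spaces by Theorem~\ref{t:cz-fc}. This direction is essentially immediate from what precedes.

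The main obstacle is the reduction in the forward direction, namely producing inside the perfect kernel a closed copy of $2^\w$ (routine: Cantor scheme of nonempty open sets with disjoint closures, using that the kernel is compact, perfect, Hausdorff) and then transferring non-$CZ$-ness correctly. The cleanest packaging is: (1) $2^\w$ is not a $CZ$-space, since it is perfectly normal and contains an $F_\sigma$ set that is not $G_\delta$ (Proposition~\ref{p:pn-CZ=sigma}); (2) a closed subspace of a compact space is $C^\ast$-embedded, hence $z$-embedded (Lemma~\ref{l:C-embed-z-embed}(i)); (3) by Proposition~\ref{p:zem}, a $z$-embedded subspace of a $CZ$-space is a $CZ$-space; (4) a non-scattered compact space contains a closed copy of $2^\w$; so a non-scattered compact space is not a $CZ$-space. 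Combined with the converse via Theorem~\ref{t:cz-fc}, this yields the equivalence.
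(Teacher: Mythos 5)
Your converse direction (scattered $\Rightarrow$ functionally countable $\Rightarrow$ $CZ$, via Theorem~\ref{t:cz-fc}) is exactly the paper's argument and is fine. The forward direction, however, as you finally package it, rests on the claim that every non-scattered compact space contains a closed topological copy of $2^{\omega}$, and that claim is false outside the metrizable setting. The space $\beta\NN\setminus\NN$ is compact and has no isolated points (so it is not scattered), yet it contains no nontrivial convergent sequence and therefore no copy of $2^{\omega}$. So steps (1)--(3) of your ``cleanest packaging'' are sound, but step (4) fails, and with it the whole reduction.

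The route you started and then abandoned --- take a continuous surjection $f$ from $X$ onto a nice space, pull back an $F_\sigma$-non-$G_\delta$ set, and show the preimage is $Zer_\sigma$ but not $Coz_\delta$ --- is essentially the paper's proof, and the ingredient you were missing at the point where you wrote ``the point requiring a short argument'' is compactness of $X$ used on the \emph{complement}. The paper takes $f\colon X\to[0,1]$ onto (which exists for any non-scattered compact space) and $Q=\Q\cap[0,1]$; then $f^{-1}(Q)$ is $Zer_\sigma$, and if it were also $Coz_\delta$ its complement $f^{-1}([0,1]\setminus Q)$ would be $F_\sigma$ in the compact space $X$, hence $\sigma$-compact, hence its image $[0,1]\setminus Q$ would be $\sigma$-compact and so $F_\sigma$ in $[0,1]$ --- contradicting Baire category. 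Note that saturation of the preimage is automatic here and no section of $f$ is needed; the continuous image of a $\sigma$-compact set being $F_\sigma$ is what transfers ``not $Coz_\delta$'' across the surjection. Without this (or some substitute), your forward direction has a genuine gap.
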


\begin{proof}
Assume that $X$ is a $CZ$-space, and suppose for a contradiction that $X$ is not scattered. Then there is a continuous function $f$ from $X$ onto the unit interval $[0,1]$. Let $Q:=\mathbb{Q}\cap[0,1]$ be the rational numbers in $[0,1]$. Then $f^{-1}(Q)$, being a $Zer_\sigma$-set, is a $CZ$-set in $X$. Hence, $X\setminus f^{-1}(Q)$ is $F_{\sigma}$. Since $X$ is compact, $X\setminus f^{-1}(Q)$ is $\sigma$-compact. Then $f(X\setminus f^{-1}(Q))=[0,1]\setminus Q$ is $F_{\sigma}$,  a contradiction.
\smallskip

Since $X$ is a scattered compact space, $X$ is functionally countable (see \cite{Arh}, \S 3). Thus, by Theorem \ref{t:cz-fc}, $X$ is a $CZ$-space.\qed
\end{proof}


The next characterization of pseudocompact $CZ$-spaces will be used in Section \ref{sec:main}. Recall that $\beta X$ denotes the Stone--\v{C}ech compactification of a space $X$.

\begin{theorem}\label{t:cz-pseudocompact} For a pseudocompact space $X$ the following assertions are equivalent:
\begin{enumerate}
\item[{\rm(i)}] $X$ is  a $CZ$-space;
\item[{\rm(ii)}] $\beta X$ is scattered;
\item[{\rm(iii)}] $\beta X$ is functionally countable;
\item[{\rm(iv)}] $X$ is functionally countable.
\end{enumerate}
\end{theorem}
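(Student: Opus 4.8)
The plan is to establish the cycle of implications (iv) $\Rightarrow$ (iii) $\Rightarrow$ (ii) $\Rightarrow$ (i) $\Rightarrow$ (iv), exploiting the key fact that a pseudocompact space $X$ is $G_\delta$-dense and $C^\ast$-embedded (indeed $C$-embedded into a dense subset of) $\beta X$, so that Propositions \ref{l:zcembed} and \ref{p:cz-nu} can be brought to bear. For (iv) $\Rightarrow$ (iii): since $X$ is pseudocompact, every $f\in C(\beta X)$ restricts to a bounded continuous function on $X$, so any second countable continuous image of $\beta X$ is already a continuous image of $X$ via restriction; hence if $X$ is functionally countable so is $\beta X$. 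The implication (iii) $\Rightarrow$ (ii) is the classical fact (quoted already in the proof of Theorem \ref{t:cz-compact}) that a compact space is functionally countable if and only if it is scattered. The implication (ii) $\Rightarrow$ (i) is Theorem \ref{t:cz-compact} together with Proposition \ref{p:zem}: if $\beta X$ is scattered then $\beta X$ is a $CZ$-space, and since a pseudocompact space is $C^\ast$-embedded, hence $z$-embedded (Lemma \ref{l:C-embed-z-embed}(i)), in $\beta X$, Proposition \ref{p:zem} shows $X$ is a $CZ$-space.

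The remaining implication (i) $\Rightarrow$ (iv) is the one I expect to be the main obstacle, since it must convert the internal combinatorial condition on $Zer_\sigma$-sets of $X$ into functional countability. I would argue by contraposition: suppose $X$ is pseudocompact but not functionally countable, so there is a continuous surjection $g:X\to M$ onto an uncountable second countable (hence separable metrizable) space $M$. Replacing $M$ by its closure in a suitable completion and using pseudocompactness of $X$, one sees $g(X)$ contains an uncountable subset; one then wants to extract inside $M$ an uncountable set $Q$ that is an $F_\sigma$-set but not a $G_\delta$-set in $M$ — for instance, if $M$ maps continuously onto $[0,1]$ take the rationals as in Theorem \ref{t:cz-compact}; in the general metrizable case one must instead find an $F_\sigma$ subset of $M$ whose complement is not $F_\sigma$. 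Pulling this back, $g^{-1}(Q)$ is a $Zer_\sigma$-set in $X$; if $X$ were a $CZ$-space then $g^{-1}(Q)$ would be $Coz_\delta$, i.e.\ its complement $g^{-1}(M\setminus Q)$ would be $Zer_\sigma$, and pushing forward along $g$ (using that $g$ is onto and, via pseudocompactness, that zero-sets map to relatively closed, indeed ``small'' sets) would force $M\setminus Q$ to be $F_\sigma$ in $M$, a contradiction.

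The delicate point in that last paragraph is the push-forward step: a continuous image of a zero-set need not be a zero-set, so one cannot naively conclude $g(g^{-1}(M\setminus Q))$ is $F_\sigma$. The clean way around this, which I would adopt, is to route everything through $\beta X$ instead of working directly with $g$: an uncountable second countable continuous image of $X$ extends (by pseudocompactness, each coordinate function being bounded) to a continuous map $\bar g:\beta X\to \bar M$ where $\bar M$ is the metrizable compactification of $M$, and $\bar g$ is a surjection of compacta. Thus $\bar M=\bar g(\beta X)$ is an uncountable metrizable compactum, so $\beta X$ is not functionally countable, hence by Theorem \ref{t:cz-compact} not scattered, hence by Theorem \ref{t:cz-compact} again not a $CZ$-space; finally, since $X$ is dense and $C$-embedded in its realcompactification and $\beta X$ sits between them in the pseudocompact case (indeed $\upsilon X=\beta X$ when $X$ is pseudocompact), Proposition \ref{p:cz-nu} (or directly Proposition \ref{l:zcembed}) gives that $X$ is not a $CZ$-space either. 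This reduces (i) $\Rightarrow$ (iv) entirely to the already-proved compact case plus the transfer results, so no new push-forward lemma about zero-sets is needed.
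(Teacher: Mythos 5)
Your proposal is correct in substance but organizes part of the argument differently from the paper. For the equivalence (i)$\Leftrightarrow$(ii) you do essentially what the paper does: transfer the $CZ$-property between $X$ and $\beta X$ via $z$-embedding/$C$-embedding (Lemma \ref{l:C-embed-z-embed}, Propositions \ref{p:zem} and \ref{p:cz-nu}), the identity $\upsilon X=\beta X$ for pseudocompact $X$, and Theorem \ref{t:cz-compact}. The genuine difference is in (ii)$\Leftrightarrow$(iii)$\Leftrightarrow$(iv): the paper simply cites Proposition 4.5 of \cite{Choban-97}, whereas you prove these equivalences directly by extending bounded continuous maps to $\beta X$ and invoking the classical fact that a compact space is functionally countable if and only if it is scattered; this makes the proof self-contained, and your decision to abandon the push-forward sketch of (i)$\Rightarrow$(iv) in favour of routing everything through $\beta X$ was the right call. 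One step needs shoring up, however: in (iv)$\Rightarrow$(iii) you claim that every second countable continuous image of $\beta X$ is already the image of $X$ under restriction, and you justify this only by boundedness of the restricted functions. Boundedness is not the relevant point; what you need is that for a continuous $h$ on $\beta X$ with second countable (hence compact metrizable) image, the set $h(X)$ is dense in $h(\beta X)$ and, being a pseudocompact subspace of a metrizable space, is compact and therefore closed, whence $h(X)=h(\beta X)$. Without that observation $h(X)$ could a priori be a proper countable dense subset of an uncountable compactum, and the implication would fail. A minor citation slip: the fact that a functionally countable compact space is scattered is the classical statement quoted before Theorem \ref{t:cz-fc}, not Theorem \ref{t:cz-compact} itself, which concerns the $CZ$-property.
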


\begin{proof}
Taking into account that the realcompactification $\upsilon X$ of a pseudocompact space $X$ is exactly $\beta X$ (see \cite[3.11.C]{Eng}), Proposition \ref{p:cz-nu} and Theorem \ref{t:cz-compact} imply that  $X$ is a $CZ$-space if, and only if, $\beta X$ is scattered. This proves the equivalence (i)$\LRa$(ii). The equivalences (ii)$\LRa$(iii)$\LRa$(iv) are proved in Proposition 4.5 of \cite{Choban-97}.\qed
\end{proof}

The condition that $\beta X$ is scattered in Theorem \ref{t:cz-pseudocompact} cannot be replaced by the condition that $X$ is itself scattered as the following example shows.
\begin{example}\label{Examp2}
There is a scattered pseudocompact space $X$ which is not a $CZ$-space.
\end{example}

\begin{proof}
Let $X$ be a Mr\'{o}wka--Isbell space such that $\beta X\setminus X$ is homeomorphic to the interval $[0,1]$, for a such space see \cite[Theorem 8.6.2]{HHH2018pc}. In particular, $\beta X$ is not scattered. Then the space $X$ is pseudocompact, locally countable, locally metrizable, locally compact, and scattered. However, by Theorem \ref{t:cz-pseudocompact}, the space $X$ is not a $CZ$-space.\qed
\end{proof}


 The next class of spaces will play an essential role to characterize reflexive type properties of $B_1(X)$, see Theorem \ref{t:B1-quasi-complete}.

\begin{definition} \label{def:Qf-space} {\em
A space $X$ is called a {\em $Q_f$-space}  if each subset of X is a $CZ$-set.}
\end{definition}

The following statement follows from the definitions  of $CZ$-spaces and $Q_f$-spaces.

\begin{proposition}\label{p:Qf-CZ}
Any $Q_f$-space is a $CZ$-space.
\end{proposition}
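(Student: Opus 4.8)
The plan is to unwind the two definitions and observe that the $Q_f$-property is literally a strengthening of the $CZ$-property. Recall that a $CZ$-space is one in which every $Zer_\sigma$-set is a $CZ$-set, i.e.\ is simultaneously a $Zer_\sigma$-set and a $Coz_\delta$-set; equivalently, every $Zer_\sigma$-set is a $Coz_\delta$-set. A $Q_f$-space is one in which \emph{every} subset whatsoever is a $CZ$-set.

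So the first (and essentially only) step is: let $X$ be a $Q_f$-space and let $A\subseteq X$ be an arbitrary $Zer_\sigma$-set. By the defining property of a $Q_f$-space, $A$ is a $CZ$-set, hence in particular a $Coz_\delta$-set. Since $A$ was an arbitrary $Zer_\sigma$-set, this shows every $Zer_\sigma$-set of $X$ is a $Coz_\delta$-set, which is exactly the definition of a $CZ$-space. (One does not even need to invoke that $A$ is a $Zer_\sigma$-set to begin with — any subset works — but phrasing it this way matches Definition \ref{def:CZ-space} verbatim.)

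There is really no obstacle here: the statement is a one-line consequence of comparing Definition \ref{def:CZ-space} with Definition \ref{def:Qf-space}, and the excerpt itself flags it as ``following from the definitions.'' The only thing to be careful about is making sure the logical direction is right — that ``all subsets are $CZ$-sets'' implies ``all $Zer_\sigma$-sets are $CZ$-sets'' (trivially, since $Zer_\sigma$-sets are a subclass of all subsets), and not the reverse. I would write the proof in two sentences and move on.

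\begin{proof}
Let $X$ be a $Q_f$-space and let $A$ be an arbitrary $Zer_\sigma$-set in $X$. By Definition \ref{def:Qf-space}, every subset of $X$ is a $CZ$-set, so in particular $A$ is a $CZ$-set and hence a $Coz_\delta$-set. Since $A$ was an arbitrary $Zer_\sigma$-set, every $Zer_\sigma$-set of $X$ is a $Coz_\delta$-set, i.e.\ $X$ is a $CZ$-space by Definition \ref{def:CZ-space}.\qed
\end{proof}
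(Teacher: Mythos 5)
Your proof is correct and matches the paper's treatment: the paper offers no proof at all, merely noting that the statement "follows from the definitions," and your two-sentence unwinding of Definitions \ref{def:CZ-space} and \ref{def:Qf-space} is exactly that argument made explicit.
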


We shall use repeatedly the following well-known fact (see Exercise 3.A.1 in \protect\cite{lmz1}):  $f\in B_1(X)$ if, and only if, $f^{-1}(U)\in Zer_{\sigma}$ for every open $U\subseteq \IR$. Consequently,  the characteristic function $\mathbf{1}_A$ of a set $A\subseteq X$ is Baire one if, and only if, $A$ is a $CZ$-set.

Recall that a space $X$ is called a {\em $Q$-space} if every subset of $X$  is of type $F_\sigma$ in $X$.

\begin{proposition}\label{p:Qf-def}
For a space $X$, the following assertions are equivalent:
\begin{enumerate}
\item[{\rm (i)}] $X$ is a $Q_f$-space;
\item[{\rm (ii)}] $X$ is a $Q$-space and each closed subset of $X$ is a $CZ$-set;
\item[{\rm (iii)}]
$B_1(X)=\RR^X$.
\end{enumerate}
\end{proposition}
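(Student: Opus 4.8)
The plan is to prove the cycle of implications (i)$\Rightarrow$(iii)$\Rightarrow$(ii)$\Rightarrow$(i), leaning on the ``well-known fact'' recalled just above the statement: a characteristic function $\mathbf{1}_A$ is Baire one precisely when $A$ is a $CZ$-set, and more generally $f\in B_1(X)$ iff $f^{-1}(U)\in Zer_\sigma$ for every open $U\subseteq\IR$.

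First I would prove (i)$\Rightarrow$(iii). Assume $X$ is a $Q_f$-space, so every subset of $X$ is a $CZ$-set. Take an arbitrary $f\colon X\to\IR$; I must show $f\in B_1(X)$. By the recalled criterion it suffices to check that $f^{-1}(U)$ is a $Zer_\sigma$-set for every open $U\subseteq\IR$. Write $U$ as a countable union of open intervals (or just note $U$ is a countable union of closed intervals with rational endpoints together with its endpoints handled separately); then $f^{-1}(U)$ is a countable union of sets of the form $f^{-1}(J)$. Each such preimage is a subset of $X$, hence a $CZ$-set by hypothesis, hence in particular a $Zer_\sigma$-set; and $Zer_\sigma$-sets are closed under countable unions. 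So $f^{-1}(U)\in Zer_\sigma$, giving $f\in B_1(X)$. Thus $B_1(X)=\IR^X$. Conversely, for (iii)$\Rightarrow$(i): if $B_1(X)=\IR^X$, then for every $A\subseteq X$ the characteristic function $\mathbf{1}_A$ is Baire one, so $A$ is a $CZ$-set by the recalled fact; hence $X$ is a $Q_f$-space. (This already establishes (i)$\Leftrightarrow$(iii), so strictly only one more implication chain is needed, but I will phrase it as a triangle for clarity.)

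Next, (iii)$\Rightarrow$(ii) (equivalently (i)$\Rightarrow$(ii)). If every subset of $X$ is a $CZ$-set, then in particular every subset is a $Zer_\sigma$-set, i.e.\ an $F_\sigma$-set, so $X$ is a $Q$-space; and every closed subset, being a subset, is a $CZ$-set. For the remaining implication (ii)$\Rightarrow$(i), assume $X$ is a $Q$-space in which every closed set is a $CZ$-set, and fix an arbitrary $A\subseteq X$; I must show $A$ is a $CZ$-set. Since $X$ is a $Q$-space, write $A=\bigcup_{n\in\w}F_n$ with each $F_n$ closed. By hypothesis each $F_n$ is a $CZ$-set, in particular a $Coz_\delta$-set; but a $Coz_\delta$-set is, a fortiori, a $Zer_\sigma$-set (indeed any $Coz_\delta$-set is both, being a $CZ$-set — more simply, each $F_n$ is a $Zer_\sigma$-set because it is a zero-set... wait, $F_n$ is merely closed, not necessarily a zero set; but $F_n$ being a $CZ$-set means $F_n\in Zer_\sigma$, which is exactly what we need). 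So $A$ is a countable union of $Zer_\sigma$-sets, hence $A\in Zer_\sigma$. It remains to show $A\in Coz_\delta$, equivalently $X\setminus A\in Zer_\sigma$. Now $X\setminus A=\bigcap_{n\in\w}(X\setminus F_n)$; since $X$ is a $Q$-space, $X\setminus A$ is also $F_\sigma$, say $X\setminus A=\bigcup_{m}G_m$ with $G_m$ closed, and each $G_m$ is a $CZ$-set by hypothesis, hence in $Zer_\sigma$, so $X\setminus A\in Zer_\sigma$, i.e.\ $A\in Coz_\delta$. Therefore $A$ is a $CZ$-set, and $X$ is a $Q_f$-space.

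The main obstacle is the bookkeeping in (ii)$\Rightarrow$(i): one must use the $Q$-space property on \emph{both} $A$ and its complement, and then invoke the hypothesis ``every closed set is a $CZ$-set'' on the pieces of each, extracting from ``$CZ$-set'' only the ``$Zer_\sigma$'' half in each case and assembling the two halves into $A\in Zer_\sigma$ and $A\in Coz_\delta$ respectively. Once this is seen, everything else is a direct unwinding of the definitions together with the closure of $Zer_\sigma$ under countable unions and the recalled characterization of Baire-one functions; no topological machinery beyond that is required.
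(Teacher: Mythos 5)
Your proposal is correct and follows essentially the same route as the paper's own proof: (i)$\Leftrightarrow$(iii) via the recalled characterization of Baire-one functions and characteristic functions, and (ii)$\Rightarrow$(i) by decomposing both $A$ and $X\setminus A$ into countable unions of closed sets and extracting the $Zer_\sigma$ half of the $CZ$ hypothesis from each piece. The only cosmetic difference is your unnecessary decomposition of $U$ into intervals in (i)$\Rightarrow$(iii) — $f^{-1}(U)$ is already a subset of $X$, hence a $CZ$-set directly.
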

\begin{proof}
(i)$\Ra$(ii) follows from the fact that every $CZ$-set is an $F_\sigma$-set.
\smallskip

(ii)$\Ra$(i) Let $M$ be an arbitrary subset of $X$. Since $X$ is a $Q$-space, we have $M=\bigcup_{n\in\om} F_n$, where each $F_n$ is a closed set. Then each $F_n$ is a $CZ$-set
and $M$ is a $Zer_\sigma$-set. Analogously $X\SM M$ is a  $Zer_\sigma$-set. Thus, $M$ is a $CZ$-set.

(i)$\Ra$(iii)
Let $f:X\to\IR$ be an arbitrary function. For every open set $U\subseteq \IR$, by (i), the set $f^{-1}(U)$ is a $CZ$-set and hence it is a $Zer_{\sigma}$-set. Thus $f\in B_1(X)$.
\smallskip

(iii)$\Ra$(i)
Let $M$ be an arbitrary subset of $X$. Since $\mathbf{1}_M\in B_1(X)$, $M$ is  a $CZ$-set.
\end{proof}

Recall that a space $X$ is {\em cleavable} (some authors use the term {\em splittable}) if for every subset $A$ of $X$, there is a continuous surjective mapping $f:X\to\IR^\w$ such that the sets $f(A)$ and $f(X\SM A)$ are disjoint. We refer the reader to the survey of Arhangel'skii \cite{Arh1993}.
It is known (see \cite[Theorem~3.5]{Arh1993}) that $X$ is cleavable if, and only if, for any $f \in \IR^X$ there exists a countable $A \subseteq C_p(X)$ such that $f \in \overline{A}$.
This result and Proposition \ref{p:Qf-def} imply the following assertion.

\begin{proposition}\label{p:cleavable}
Any $Q_f$-space is cleavable.
\end{proposition}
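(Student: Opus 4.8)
The plan is to combine Proposition \ref{p:Qf-def} with the cited characterization of cleavable spaces from \cite[Theorem~3.5]{Arh1993}. By Proposition \ref{p:Qf-def}, a $Q_f$-space $X$ satisfies $B_1(X)=\IR^X$; in particular, every $f\in\IR^X$ is a Baire-one function, i.e., $f$ is the pointwise limit of a sequence $(f_n)_{n\in\w}$ in $C_p(X)$. First I would set $A:=\{f_n : n\in\w\}$, which is a countable subset of $C_p(X)$ with $f\in\overline{A}$ in the pointwise topology. Thus for any $f\in\IR^X$ there is a countable $A\subseteq C_p(X)$ with $f\in\overline{A}$.

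Next I would invoke the quoted equivalence: $X$ is cleavable if, and only if, for every $f\in\IR^X$ there exists a countable $A\subseteq C_p(X)$ with $f\in\overline{A}$. Since we have just verified the right-hand side for an arbitrary $Q_f$-space, it follows immediately that $X$ is cleavable. This essentially completes the proof, as the statement has already been set up so that only these two ingredients are needed.

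The only step requiring a small amount of care is the first one: passing from ``$f\in B_1(X)$'' to ``$f$ lies in the pointwise closure of a countable subset of $C_p(X)$''. This is really just the definition of a Baire-one function (limits of sequences in $C_p(X)$), so there is no genuine obstacle; one should simply note that a pointwise convergent sequence has its limit in the closure of its range. There is no hard part here — the statement is a direct corollary of Proposition \ref{p:Qf-def} and the Arhangel'skii characterization, exactly as the sentence preceding the proposition indicates.

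\begin{proof}
Let $X$ be a $Q_f$-space. By Proposition \ref{p:Qf-def}, $B_1(X)=\IR^X$. Fix an arbitrary $f\in\IR^X$. Then $f\in B_1(X)$, so there is a sequence $(f_n)_{n\in\w}$ in $C_p(X)$ converging pointwise to $f$. Setting $A:=\{f_n:n\in\w\}$, we obtain a countable subset of $C_p(X)$ with $f\in\overline{A}$. By \cite[Theorem~3.5]{Arh1993}, $X$ is cleavable.\qed
\end{proof}
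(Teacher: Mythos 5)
Your proof is correct and follows exactly the route the paper intends: Proposition \ref{p:Qf-def} gives $B_1(X)=\IR^X$, so every $f\in\IR^X$ lies in the pointwise closure of the countable range of an approximating sequence in $C_p(X)$, and the Arhangel'skii characterization of cleavability finishes the argument. The paper leaves this one-line deduction implicit ("This result and Proposition \ref{p:Qf-def} imply..."), and you have simply written it out.
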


It follows from Theorem \ref{t:B1-Baire-like} that if $X$ is a normal $Q$-space, then $B_1(X)=\IR^X$. This result and Proposition \ref{p:Qf-def} imply the next proposition.

\begin{proposition}\label{p:nQ-Qf}
Any normal $Q$-space is a $Q_f$-space.
\end{proposition}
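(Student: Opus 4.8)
The plan is to combine part (iii) of Theorem \ref{t:B1-Baire-like} with the functional characterization of $Q_f$-spaces in Proposition \ref{p:Qf-def}. First I would note that, since $X$ is normal, Theorem \ref{t:B1-Baire-like}(iii) tells us that $B_1(X)=\IR^X$ exactly when $X$ is a $Q$-space; as $X$ is a $Q$-space by hypothesis, this gives $B_1(X)=\IR^X$.

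Next I would apply Proposition \ref{p:Qf-def}, whose equivalence (i)$\LRa$(iii) reads ``$X$ is a $Q_f$-space'' $\LRa$ ``$B_1(X)=\RR^X$''. Having just established the right-hand side, the implication (iii)$\Ra$(i) yields immediately that $X$ is a $Q_f$-space, which finishes the proof.

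There is no serious technical obstacle: the statement is a two-step chaining of facts already recorded. The only point worth emphasising is that normality is genuinely used. Indeed, a normal $Q$-space is automatically perfectly normal: every subset of a $Q$-space is $F_\sigma$, hence also $G_\delta$, so every closed set is a closed $G_\delta$-set, and in a normal space such a set is a zero set. This is precisely what stands behind clause (ii) of Proposition \ref{p:Qf-def}: in such a space every $F_\sigma$ set is a $Zer_\sigma$-set, so for an arbitrary $M\subseteq X$ both $M$ and $X\SM M$ are $Zer_\sigma$-sets, i.e. every subset of $X$ is a $CZ$-set. I would present the short route through Theorem \ref{t:B1-Baire-like} and Proposition \ref{p:Qf-def}, mentioning this direct observation only as an aside.
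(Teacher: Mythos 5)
Your proof is correct and follows exactly the paper's route: the authors likewise derive the proposition by combining Theorem \ref{t:B1-Baire-like}(iii) (a normal $Q$-space satisfies $B_1(X)=\IR^X$) with the equivalence of (i) and (iii) in Proposition \ref{p:Qf-def}. Your aside giving the direct set-theoretic argument (normal $Q$-space $\Rightarrow$ perfectly normal $\Rightarrow$ every subset and its complement are $Zer_\sigma$, hence every subset is a $CZ$-set) is also valid, but the main argument matches the paper's.
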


Following  Arhangel'skii (see \cite{Arh1993}), a space $X$ is said to be {\em weakly normal} if for every two disjoint closed subsets $A$ and $B$ of $X$, there exists a continuous mapping $f:X\to\IR^\w$ such that the sets $f(A)$ and $f(X\SM A)$ are disjoint. By \cite[Theorem~3.6]{Arh1993}, every cleavable space is weakly normal.
This result and Proposition \ref{p:Qf-def} imply the following statement.

\begin{proposition}\label{p:nQ-wn}
Any $Q_f$-space is weakly normal.
\end{proposition}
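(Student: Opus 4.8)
The plan is to deduce the statement from Proposition \ref{p:cleavable} together with the cited result of Arhangel'skii. By Proposition \ref{p:cleavable}, every $Q_f$-space $X$ is cleavable, and by \cite[Theorem~3.6]{Arh1993} every cleavable space is weakly normal; chaining these two facts yields the claim at once, so the proof is essentially a one-line reference. (Tracing the dependencies back, Proposition \ref{p:cleavable} itself rests on the characterization of cleavability via countable density in $C_p(X)$ and on Proposition \ref{p:Qf-def}, so ultimately the only input about $X$ that is used is $B_1(X)=\IR^X$.)

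For completeness I would also record the direct argument, which avoids invoking the general cleavability--weak-normality implication. Let $A$ and $B$ be disjoint closed subsets of $X$. Since $X$ is a $Q_f$-space it is cleavable (Proposition \ref{p:cleavable}), so applying cleavability to the subset $A$ produces a continuous surjection $f\colon X\to\IR^\w$ with $f(A)\cap f(X\SM A)=\emptyset$. Because $B\subseteq X\SM A$, this gives $f(A)\cap f(B)=\emptyset$, which is exactly what the definition of weak normality demands.

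I expect no genuine obstacle: the content is already packaged in Proposition \ref{p:cleavable} and the quoted theorem, and the only thing to verify is that the notion of cleavability used in \cite{Arh1993} agrees with the one recalled before Proposition \ref{p:cleavable}. The single point worth a sentence of care is the direction of the quantifiers in the definition of weak normality --- separation of $f$-images is required only for pairs of disjoint \emph{closed} sets, whereas cleavability supplies it for \emph{arbitrary} subsets --- which is precisely why the implication runs from cleavable to weakly normal and not in the reverse direction.
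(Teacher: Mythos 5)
Your proposal is correct and follows exactly the paper's own route: Proposition \ref{p:cleavable} gives that every $Q_f$-space is cleavable, and \cite[Theorem~3.6]{Arh1993} gives that cleavable spaces are weakly normal. The extra ``direct argument'' you sketch is just an unwinding of the same two facts, so there is nothing genuinely different here.
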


Propositions \ref{p:nQ-Qf} and \ref{p:nQ-wn} motivate the following question.

\begin{problem}
Is there a non-normal $Q_f$-space?
\end{problem}

Using some results on cleavable spaces we select the next assertion.

\begin{proposition}\label{p:B1-cleavable}
Let $X$ be a $Q_f$-space. Then:
\begin{enumerate}
\item[{\rm(i)}] if $X$ is Lindel\"{o}f, then $X$ is submetrizable;
\item[{\rm(ii)}] if the cardinality of $X$ is an Ulam nonmeasurable cardinal, then $X$ is realcompact.
\end{enumerate}
\end{proposition}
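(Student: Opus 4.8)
The plan is to reduce both statements to known results about cleavable spaces via Proposition~\ref{p:cleavable}, which tells us that any $Q_f$-space is cleavable. Then (i) and (ii) become instances of structural theorems about cleavable (= splittable) spaces that appear in Arhangel'skii's survey \cite{Arh1993}.

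For part (i): I would invoke the theorem of Arhangel'skii that a Lindel\"of cleavable space is submetrizable. Recall that a space is submetrizable if it admits a continuous bijection onto a metrizable space. The idea behind that theorem (which I would cite rather than reprove) is that cleavability over all subsets, combined with Lindel\"ofness, lets one glue together countably many continuous maps into $\IR^\w$ that separate points; one uses that a Lindel\"of space has countable extent and that the diagonal map into a countable product of separable metrizable spaces, being injective, witnesses submetrizability. So the proof of (i) is essentially one line: $X$ is a $Q_f$-space, hence cleavable by Proposition~\ref{p:cleavable}, hence (being Lindel\"of) submetrizable by \cite[Theorem~...]{Arh1993}.

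For part (ii): here I would use the result from the theory of cleavable spaces that a cleavable space whose cardinality is Ulam nonmeasurable is realcompact. The underlying mechanism is that cleavability produces, for the relevant sets, continuous real-valued functions, so $X$ embeds (at least $z$-embeds, separating points from closed sets by the weak normality coming from Proposition~\ref{p:nQ-wn}) into a product of copies of $\IR$; then one applies the classical Shirota-type theorem that a space of Ulam nonmeasurable cardinality which is, say, submetrizable or has a suitable separating family of real-valued continuous functions is realcompact. In fact, once (i)-type arguments give enough continuous real functions, realcompactness of subspaces of products $\IR^\kappa$ for $\kappa$ Ulam nonmeasurable is standard. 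Again this should be cited from \cite{Arh1993} as a statement about cleavable spaces rather than reassembled from scratch.

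The main obstacle is not mathematical depth but bookkeeping: making sure the precise statements invoked from \cite{Arh1993} are exactly ``Lindel\"of cleavable $\Rightarrow$ submetrizable'' and ``cleavable of Ulam nonmeasurable cardinality $\Rightarrow$ realcompact,'' and that the hypotheses match (cleavability in the $\IR^\w$-sense used here coincides with the splittability used in the survey, which the excerpt has already noted). If the survey states these only for slightly different notions, one would need to bridge the gap, e.g. deriving submetrizability first and then obtaining realcompactness as a consequence of submetrizability plus Ulam nonmeasurable cardinality via Shirota's theorem. So the honest proof reads: \emph{By Proposition~\ref{p:cleavable}, $X$ is cleavable; now (i) and (ii) follow from the corresponding results on cleavable spaces in \cite{Arh1993} (see also the discussion of weak normality in Proposition~\ref{p:nQ-wn}).}
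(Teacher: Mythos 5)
Your proof is correct and matches the paper's argument exactly: the paper also deduces cleavability from Proposition~\ref{p:cleavable} and then cites Theorems~3.1 and~3.4 of \cite{Arh1993} for (i) and (ii), respectively. The extra speculation about the internal mechanisms of those theorems is unnecessary but harmless.
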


\begin{proof}
By Proposition \ref{p:cleavable}, the space $X$ is cleavable. Thus the  clauses (i) and (ii) are valid by Theorems 3.1  and 3.4 of \cite{Arh1993}, respectively. \qed
\end{proof}

\begin{proposition}\label{p:B1-pc}
A a pseudocompact space $X$ is a $Q_f$-space if, and only if, it is a countable metrizable compact space.
\end{proposition}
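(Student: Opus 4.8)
The plan is to prove the two implications separately; the direction ``countable metrizable compact $\Rightarrow$ $Q_f$'' is immediate, and essentially all the work lies in the converse. For that easy direction: such an $X$ is normal, and since it is a countable $T_1$-space every subset of $X$ is a countable union of singletons, hence $F_\sigma$; thus $X$ is a $Q$-space, and Proposition~\ref{p:nQ-Qf} shows that $X$ is a $Q_f$-space.

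For the converse, assume $X$ is pseudocompact and a $Q_f$-space. First I would record the structure already at hand: by Proposition~\ref{p:Qf-CZ}, $X$ is a $CZ$-space, so Theorem~\ref{t:cz-pseudocompact} gives that $\beta X$ is scattered, and therefore $X$, being a subspace of $\beta X$, is scattered. The argument then proceeds in two stages. \emph{Stage 1 ($X$ is separable).} Let $I$ be the set of isolated points of $X$. Since $X$ is scattered, $I$ is dense (if $\overline{I}\ne X$, the nonempty open set $X\setminus\overline{I}$ is scattered, hence has a point isolated in it, which is then isolated in $X$ --- a contradiction). Fix $S\subseteq I$. As $X$ is a $Q_f$-space, $S$ is a $CZ$-set, so $S=\bigcup_{n\in\w}Z_n$ with each $Z_n$ a zero-set of $X$. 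Each $Z_n$ is closed and contained in $I$, so $\{\{z\}:z\in Z_n\}$ is a discrete family of nonempty open subsets of $X$ (at a point of $Z_n$ its own singleton separates it, and at a point outside the closed set $Z_n$ the open set $X\setminus Z_n$ does), hence finite by pseudocompactness; so $Z_n$ is finite, $S$ is countable, and in particular $I$ is countable, so $X$ is separable. \emph{Stage 2 ($X$ is compact).} By Pospíšil's inequality $|X|\le 2^{2^{\aleph_0}}$, which is Ulam-nonmeasurable (the first measurable cardinal, if it exists, is strongly inaccessible, hence larger than $2^{2^{\aleph_0}}$), so Proposition~\ref{p:B1-cleavable}(ii) shows that $X$ is realcompact; since $X$ is pseudocompact, $\upsilon X=\beta X$, whence $X=\upsilon X=\beta X$ is compact.

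It then remains to see that a compact $Q_f$-space is countable, a countable compact Hausdorff space being automatically metrizable. Being a $CZ$-space, $X$ is scattered (Theorem~\ref{t:cz-compact}); being a $Q_f$-space, every subset of $X$ is $F_\sigma$. I would show that the Cantor--Bendixson rank of $X$ is countable: otherwise, by compactness, the $\w_1$-th Cantor--Bendixson derivative $X^{(\w_1)}$ is nonempty, while the open set $X\setminus X^{(\w_1)}$ is $F_\sigma$, say $X\setminus X^{(\w_1)}=\bigcup_n D_n$ with each $D_n$ compact; each $D_n$ is covered by the increasing open family $\{X\setminus X^{(\beta)}:\beta<\w_1\}$ and so lies in a single $X\setminus X^{(\beta_n)}$, which forces $X^{(\beta^*)}=X^{(\w_1)}$ for $\beta^*=\sup_n\beta_n<\w_1$; but then $X^{(\beta^*)}$ would be a nonempty perfect subset of the scattered space $X$, a contradiction. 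Moreover each Cantor--Bendixson level of $X$, being the $F_\sigma$ set of points isolated in the corresponding derivative, is a countable union of compact discrete (hence finite) sets, so is countable; as the rank is countable, $X$ is a countable union of countably many countable sets, and therefore countable.

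The main obstacle is Stage~2: pseudocompactness is much weaker than countable compactness, and a separable pseudocompact space need not be Lindel\"{o}f, so one cannot extract compactness by elementary covering arguments --- the detour through the cardinality bound and the cleavability result (Proposition~\ref{p:B1-cleavable}) seems necessary. A secondary subtlety is that the finiteness of the $Z_n$ in Stage~1 and the rank computation above genuinely use the $CZ$/$Q_f$ hypotheses (that the relevant sets are $Zer_{\sigma}$), not merely that $X$ is scattered.
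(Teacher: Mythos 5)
Your proof is correct, but it takes a substantially longer and more self-contained route than the paper for the hard implication. The paper's argument is three lines: a $Q_f$-space is cleavable (Proposition~\ref{p:cleavable}), and Arhangel'skii's Theorem~3.3 in \cite{Arh1993} states that a cleavable pseudocompact space is already a metrizable compact space; scatteredness then comes from Theorem~\ref{t:cz-pseudocompact} exactly as in your Stage~0, and a scattered compact metrizable space is countable. You avoid invoking that black-box theorem of Arhangel'skii and instead rebuild its conclusion from weaker ingredients: separability from pseudocompactness plus the fact that the set of isolated points is a $Zer_\sigma$-set whose zero-set pieces are finite (a nice use of the $Q_f$ hypothesis that the paper does not need), compactness via Posp\'{\i}\v{s}il's bound $|X|\le 2^{2^{\aleph_0}}$ and the realcompactness criterion of Proposition~\ref{p:B1-cleavable}(ii) (so you still lean on cleavability, just through a different consequence), and countability via a Cantor--Bendixson count that uses the $Q$-property directly rather than metrizability; metrizability then falls out at the end from countable compact Hausdorff being second countable. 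All steps check out --- in particular the discreteness argument in Stage~1, the cofinality argument showing the Cantor--Bendixson rank is countable, and the finiteness of compact subsets of each (relatively discrete) level. What your version buys is transparency about exactly which hypotheses drive which conclusion and independence from \cite[Theorem~3.3]{Arh1993}; what it costs is length, and it does not eliminate the dependence on the cleavability machinery altogether. Your treatment of the easy direction via Proposition~\ref{p:nQ-Qf} is also cleaner than the paper's terse citation of Proposition~\ref{p:pn-CZ=sigma}.
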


\begin{proof}
Assume that $X$ is a $Q_f$-space. By Proposition \ref{p:cleavable}, the space $X$ is cleavable. Therefore, by Theorems 3.3 of \cite{Arh1993}, the space $X$ is a metrizable compact space.
 Since $X$ is also a $CZ$-space (Proposition \ref{p:Qf-CZ}), Theorem \ref{t:cz-pseudocompact} implies that $X$ is scattered. Being a scattered metrizable compact space $X$ is countable. The converse assertionis follows from Proposition \ref{p:pn-CZ=sigma}.\qed
\end{proof}

Below we formulate two old problems about cleavable spaces (see \cite{Arh1993,ArhSha1988}) adopted to $Q_f$-spaces.

\begin{problem}
Is it true that if $X$ is a $Q_f$-space, then $X$ is Dieudonne complete?
\end{problem}

\begin{problem}
Is it true that if $X$  is a $Q_f$-space, then the diagonal of the square $X\times X$ is a $G_\delta$-set?
\end{problem}

Below we give an example of $Q$-spaces. Recall that a space X is called {\em strictly $\sigma$-discrete} if X is the union of a countable family of closed discrete subspaces.

\begin{proposition}\label{p:ssd-Q}
Any strictly $\sigma$-discrete space is a $Q$-space.
\end{proposition}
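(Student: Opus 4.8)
The plan is to show that if $X = \bigcup_{n \in \omega} D_n$ where each $D_n$ is closed and discrete in $X$, then every subset $M \subseteq X$ is an $F_\sigma$-set. First I would observe that it suffices to handle the case of a single closed discrete subspace: indeed, writing $M = \bigcup_{n} (M \cap D_n)$, if each $M \cap D_n$ is $F_\sigma$ in $X$ then so is $M$, since a countable union of $F_\sigma$-sets is $F_\sigma$. So the task reduces to: for a closed discrete subspace $D \subseteq X$ and any $A \subseteq D$, the set $A$ is $F_\sigma$ in $X$.

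The key point is that $D$ is closed in $X$, so any subset of $D$ that is closed \emph{in $D$} is also closed in $X$. But in the discrete space $D$, \emph{every} subset is closed in $D$. Hence every subset of $D$ is closed in $X$, in particular it is an $F_\sigma$-set in $X$. Combining this with the reduction in the previous paragraph: for arbitrary $M \subseteq X$, each $M \cap D_n$ is a subset of the closed discrete set $D_n$, hence closed in $X$, hence $M = \bigcup_{n \in \omega}(M \cap D_n)$ is a countable union of closed sets, i.e.\ $M$ is $F_\sigma$. Since $M$ was arbitrary, $X$ is a $Q$-space.

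I do not expect any real obstacle here; the statement is essentially immediate once one notes that a discrete subspace has the property that all its subsets are relatively closed, and that ``closed in a closed subspace'' upgrades to ``closed in the whole space.'' The only thing to be slightly careful about is the direction of the implications ``closed in $D$'' versus ``closed in $X$'' and the fact that finiteness is not needed — $D_n$ need not be finite, merely closed and discrete — but this causes no trouble since discreteness already gives that arbitrary subsets of $D_n$ are closed in $D_n$.
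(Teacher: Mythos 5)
Your proof is correct and follows essentially the same route as the paper: write $M=\bigcup_{n\in\omega}(M\cap D_n)$ and note that each $M\cap D_n$, being a subset of a closed discrete subspace, is closed in $X$, so $M$ is $F_\sigma$. The paper's proof is just a one-line version of this same argument.
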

\begin{proof}
Let $X=\bigcup_{n\in\om} X_n$, where all $X_n$ are closed discrete subspaces of $X$. Let $M\subseteq X$. Then $M=\bigcup_{n\in\w} M_n$, where each $M_n :=M\cap X_n$ is closed.\qed
\end{proof}

Propositions \ref{p:nQ-Qf} and \ref{p:ssd-Q} imply the following assertion.
\begin{proposition}\label{p:ssd-Qf}
Each normal strictly $\sigma$-discrete space is a $Q_f$-space.
\end{proposition}

\begin{example}\label{exa:pcs-Qf}{\em
Let $X$ be a pseudocompact Mr\'owka-Isbell space \cite{HH2018}. Since any Mr\'owka-Isbell space is strictly $\sigma$-discrete, by Proposition \ref{p:ssd-Q}, $X$ is a $Q$-space.
Proposition \ref{p:B1-pc} implies that $X$ is not a $Q_f$-space.
Therefore, $X$ is a separable pseudocompact $Q$-space, which is neither a normal space nor a $Q_f$-space.
}\end{example}

Recall that the {\em Suslin number} or the {\em cellularity} $c(X)$ of a space $X$ is the cardinal number
\[
c(X):=\sup\big\{ |\mathcal{V}| : \mathcal{V} \text{ is a pairwise disjoint collection of non-empty open sets in }X\big\}+\w.
\]
One says that $X$ is a {\em ccc} space (ccc=countable chain condition) if $c(X) = \w$.
For any infinite cardinal $\tau$, Arhangel’skii and Shakhmatov \cite[8.6]{ArhSha1988} constructed a strictly $\sigma$-discrete normal ccc space $X$ such that $|X|\geq \tau$. This result and Proposition \ref{p:ssd-Qf} imply the following assertion.

\begin{proposition}\label{p:Gf-ccc}
For any infinite cardinal $\tau$, there exists a ccc $Q_f$-space $X$ such that $|X|\geq\tau$.
\end{proposition}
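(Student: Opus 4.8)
For any infinite cardinal $\tau$, there exists a ccc $Q_f$-space $X$ such that $|X|\geq\tau$.

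The plan is to simply invoke the cited construction of Arhangel'skii and Shakhmatov and then verify that the resulting space satisfies the $Q_f$ property via the chain of implications already established in this section. First I would fix an infinite cardinal $\tau$ and appeal to \cite[8.6]{ArhSha1988}, which provides a topological space $X$ that is simultaneously strictly $\sigma$-discrete, normal, ccc, and of cardinality at least $\tau$. Nothing further needs to be proved about the existence of such a space; the whole point is that this is a known construction.

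The second and only substantive step is to observe that normality together with strict $\sigma$-discreteness already forces the $Q_f$ property. This is exactly the content of Proposition \ref{p:ssd-Qf}: every normal strictly $\sigma$-discrete space is a $Q_f$-space. (Internally, that proposition is obtained by combining Proposition \ref{p:ssd-Q}, which says a strictly $\sigma$-discrete space is a $Q$-space, with Proposition \ref{p:nQ-Qf}, which upgrades a normal $Q$-space to a $Q_f$-space; but for the present argument we may treat Proposition \ref{p:ssd-Qf} as a black box.) Applying it to the space $X$ furnished above yields that $X$ is a $Q_f$-space, and it retains the properties of being ccc and of cardinality $\geq\tau$ because those are intrinsic to the construction.

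There is essentially no obstacle here: the proposition is a one-line corollary, and the only thing to be careful about is making sure the hypotheses of Proposition \ref{p:ssd-Qf} — normality and strict $\sigma$-discreteness — are among the properties guaranteed by \cite[8.6]{ArhSha1988}, which they are. The proof therefore reads: fix $\tau$; take the Arhangel'skii--Shakhmatov space $X$ of cardinality $\geq\tau$ which is normal, ccc, and strictly $\sigma$-discrete; by Proposition \ref{p:ssd-Qf} it is a $Q_f$-space; done.
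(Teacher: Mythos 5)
Your argument is correct and is exactly the paper's proof: the statement is presented there as an immediate consequence of the Arhangel'skii--Shakhmatov construction \cite[8.6]{ArhSha1988} of a strictly $\sigma$-discrete normal ccc space of cardinality $\geq\tau$ combined with Proposition \ref{p:ssd-Qf}. Nothing is missing.
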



\section{Main results} \label{sec:main}


First we recall the basic notions which are used in this section. All locally convex spaces $E$ (lcs, for short) are assumed to be Hausdorff. The dual space of $E$ is denoted by $E'$. The value of $\chi\in E'$ on $x\in E$ is denoted by $\langle \chi,x\rangle$. The space $E'$ endowed with the strong topology $\beta(E',E)$ is denoted by $E'_\beta$. Set $E'' :=(E'_\beta)'_\beta$. Denote by $\psi_E: E\to E''$ the canonical evaluation inclusion defined by $\langle \psi_E(x),\chi\rangle:=\langle \chi,x\rangle$ for all $x\in E$ and $\chi\in E'$.
Recall that an lcs $E$ is called
\begin{itemize}\itemsep=1pt\parskip=1pt
\item {\em semi-reflexive} if $\psi_E$ is surjective;
\item {\em reflexive} if $\psi_E$ is a topological isomorphism;
\item {\em semi-Montel} if every bounded subset of $E$ is relatively compact;
\item {\em Montel} if $E$ is semi-Montel and reflexive;
\item {\em complete} if each Cauchy net in $E$  converges;
\item {\em quasi-complete} if each closed bounded subset of $E$ is complete;
\item {\em sequentially complete} if each Cauchy sequence in $E$ converges;
\item {\em locally complete} if the closed absolutely convex hull of a null sequence in $E$ is compact.
\end{itemize}

Let $X$ be a space. If $\alpha>1$ is a countable ordinal, the  {\em space $B_\alpha(X)$ of Baire-$\alpha$ functions} is the family of all functions  $f:X\to \IR$ such that  there exists a sequence $\{ f_n\}_{n\in\w} \subseteq \bigcup_{i<\alpha} B_i(X)$ which pointwise converges to $f$. The spaces $B_\alpha(X)$ are endowed with the pointwise topology induced from the direct product $\IR^X$. The family $B(X):=\bigcup\{B_{\alpha}(X): \alpha\in \w_1\}$ is called the {\em space of Baire functions on $X$}. We shall use the following assertion.
\begin{proposition} \label{p:B(X)-seq-compl}
For every space $X$, the space $B(X)$ is sequentially complete.
\end{proposition}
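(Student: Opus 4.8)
The plan is to use the fact that the pointwise topology on $B(X)$ is the topology induced from the complete space $\IR^X$, so a Cauchy sequence automatically has a pointwise limit; the only substantive point is to verify that this limit is again a Baire function, and this reduces to the observation that a countable family of Baire functions is contained in a single Baire class of countable index.

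First I would record the monotonicity of the Baire hierarchy: $B_\beta(X)\subseteq B_\gamma(X)$ whenever $\beta\le\gamma<\w_1$. This is immediate from the definition together with the fact that a constant sequence converges pointwise to its value, which gives $\bigcup_{i<\gamma}B_i(X)\subseteq B_\gamma(X)$ for a limit ordinal $\gamma$ and $\bigcup_{i\le\delta}B_i(X)\subseteq B_{\delta+1}(X)$ for a successor $\gamma=\delta+1$; a straightforward transfinite induction then yields the claim in general. Next, let $\{f_n\}_{n\in\w}$ be a Cauchy sequence in $B(X)$. Unwinding the definition of a Cauchy sequence for the pointwise topology, this says exactly that for every $x\in X$ the real sequence $\{f_n(x)\}_{n\in\w}$ is Cauchy, hence convergent; setting $f(x):=\lim_n f_n(x)$ defines a function $f\in\IR^X$ to which $\{f_n\}$ converges pointwise. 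It remains only to show $f\in B(X)$.

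Finally, for each $n\in\w$ choose a countable ordinal $\alpha_n$ with $f_n\in B_{\alpha_n}(X)$, and put $\alpha:=\sup_{n\in\w}\alpha_n$. Since $\w_1$ has uncountable cofinality, $\alpha<\w_1$, and by the monotonicity recalled above $f_n\in B_\alpha(X)$ for every $n$. Thus $\{f_n\}_{n\in\w}$ is a sequence lying in $B_\alpha(X)\subseteq\bigcup_{i<\alpha+1}B_i(X)$ that converges pointwise to $f$, so by the definition of the Baire classes $f\in B_{\alpha+1}(X)\subseteq B(X)$. Hence every Cauchy sequence in $B(X)$ converges in $B(X)$, i.e.\ $B(X)$ is sequentially complete.

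I do not expect a genuine obstacle here: the result is a soft consequence of the completeness of $\IR^X$ and of the closure of $\bigcup_{\alpha<\w_1}B_\alpha(X)$ under pointwise limits of \emph{sequences}, the latter relying only on the regularity of $\w_1$. The two places that deserve a sentence of care are the identification of ``Cauchy in the pointwise topology'' with ``pointwise Cauchy'' (taking the maximum of finitely many threshold indices over a finite set of coordinates) and the monotonicity $B_\beta(X)\subseteq B_\gamma(X)$ of the hierarchy.
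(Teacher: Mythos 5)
Your argument is correct and follows essentially the same route as the paper's proof: fix a single countable ordinal $\alpha$ with all $f_n\in B_\alpha(X)$ and conclude that the pointwise limit lies in $B_{\alpha+1}(X)\subseteq B(X)$. You merely make explicit the details (regularity of $\w_1$, monotonicity of the hierarchy, and the identification of Cauchy in the pointwise topology with pointwise Cauchy) that the paper leaves implicit.
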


\begin{proof}
Let $\{f_n\}_{n\in\w}$ be a Cauchy sequence in $B(X)$. Take a countable ordinal $\alpha$ such that $\{f_n\}_{n\in\w}$ is contained in $B_\alpha(X)$. Then, by definition, the limit function $f(x)=\lim_n f_n(x)$ belongs to $B_{\alpha+1}(X)\subseteq B(X)$. Thus $B(X)$ is a sequentially complete space.\qed
\end{proof}

We need the next lemma. Recall that a space $X$ is said to have {\em countable pseudocharacter} if every singleton $\{x\}\subseteq X$ is the intersection of a countable family of open subsets of $X$.
\begin{lemma} \label{l:B1-qc}
If $B_1(X)$ is a quasi-complete space, then $X$ has countable pseudocharacter.
\end{lemma}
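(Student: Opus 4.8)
The plan is to argue by contraposition: assuming $X$ does \emph{not} have countable pseudocharacter, I would exhibit a Cauchy net in $B_1(X)$ with no limit in $B_1(X)$, which shows $B_1(X)$ fails to be quasi-complete (indeed fails even to be complete, but we only need the quasi-complete version, so I should make sure the offending net lies in a closed bounded set). Concretely, fix a point $x_0\in X$ whose pseudocharacter is uncountable, so that no countable family of open (equivalently, cozero) sets has intersection $\{x_0\}$. The target object will be the characteristic function $\mathbf 1_{\{x_0\}}$, and the net will be built from characteristic functions of cozero neighbourhoods of $x_0$ shrinking down toward $x_0$.

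The key steps, in order. First, recall from the excerpt that $\mathbf 1_A\in B_1(X)$ iff $A$ is a $CZ$-set, and that any zero set is a $Coz_\delta$-set while any cozero set is a $Zer_\sigma$-set. So for each cozero neighbourhood $U$ of $x_0$ that is also a zero set (closed neighbourhoods of this type exist since $X$ is Tychonoff: take $f\in C(X)$ with $f(x_0)=0$, $f\equiv 1$ off a neighbourhood, and let $U=f^{-1}([0,1/2])$, which is a zero set, hence a $Coz_\delta$-set; one can also arrange $U$ to be a cozero set to get it $CZ$ — actually it is cleanest to work with \emph{functionally closed} neighbourhoods and note each is a $CZ$-set, being simultaneously a zero set (so $Coz_\delta$) and, as I will check, a $Zer_\sigma$-set, or simply use that for the net we only need each $\mathbf 1_{U}$ to lie in $B_1(X)$, which holds whenever $U$ is a zero set that is also cozero). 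Let $\mathcal N$ be the family of all such neighbourhoods, directed downward by inclusion; this is genuinely a directed set because finite intersections of zero-set neighbourhoods are again zero-set neighbourhoods. Second, I claim the net $(\mathbf 1_{U})_{U\in\mathcal N}$ is Cauchy in the pointwise topology on $B_1(X)$. Indeed, for a basic neighbourhood of $0$ determined by finitely many points $y_1,\dots,y_k\in X$ and $\varepsilon>0$: if some $y_i\neq x_0$, then (again by the Tychonoff property) there is a zero-set neighbourhood $U_i$ of $x_0$ with $y_i\notin U_i$; taking $U^\ast=\bigcap_i U_i$ over those $i$ with $y_i\neq x_0$, every $U,V\subseteq U^\ast$ in $\mathcal N$ satisfy $\mathbf 1_U(y_i)=\mathbf 1_V(y_i)=0$ for $y_i\neq x_0$ and $=1$ for $y_i=x_0$, so $\mathbf 1_U-\mathbf 1_V$ vanishes on all the $y_i$. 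Hence the net is Cauchy. Moreover it lies in the bounded set $\{g\in B_1(X):0\le g\le 1\}$; passing to its closure in $B_1(X)$ gives a closed bounded set, so quasi-completeness would force a limit in $B_1(X)$.

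Third, identify the limit: pointwise, $\lim_U \mathbf 1_U(x)=1$ if $x=x_0$, and $=0$ if $x\neq x_0$ (for $x\neq x_0$ there is $U\in\mathcal N$ with $x\notin U$, and then all smaller $V$ also miss $x$). So the only possible pointwise limit is $\mathbf 1_{\{x_0\}}$. Fourth, and this is where the uncountable pseudocharacter is used: $\mathbf 1_{\{x_0\}}\notin B_1(X)$. By the recalled fact, $\mathbf 1_{\{x_0\}}\in B_1(X)$ would mean $\{x_0\}$ is a $CZ$-set, in particular a $Coz_\delta$-set, i.e.\ $\{x_0\}=\bigcap_{n}V_n$ with each $V_n$ cozero, hence open — contradicting that $x_0$ has uncountable pseudocharacter. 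Therefore the Cauchy net $(\mathbf 1_U)$ has no limit in $B_1(X)$, so $B_1(X)$ is not quasi-complete, completing the contrapositive.

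The main obstacle I anticipate is purely bookkeeping rather than conceptual: making sure each $\mathbf 1_U$ in the net genuinely lies in $B_1(X)$ (i.e.\ that $U$ can be chosen to be a $CZ$-set, not merely a zero set) and that the index family $\mathcal N$ is directed and ``cofinally small'' enough to separate any finite set of points from $x_0$ — both follow from the Tychonoff separation axiom, but the cleanest choice of $U$ (a functionally closed-and-open neighbourhood, or a cozero set with functionally closed closure) should be pinned down carefully. Everything else — the Cauchy verification, the pointwise limit computation, and the failure $\mathbf 1_{\{x_0\}}\notin B_1(X)$ — is routine once that choice is fixed.
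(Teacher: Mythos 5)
Your proposal is correct and follows essentially the same route as the paper: both build a Cauchy net indexed by the neighbourhoods of a fixed point, lying in the closed bounded set of Baire-one functions with values in $[0,1]$, whose only possible pointwise limit is the characteristic function of the singleton, so that quasi-completeness forces that singleton to be a $Coz_\delta$-set. The paper simply uses continuous bump functions $f_U$ with $f_U(z)=1$ and $f_U\equiv 0$ off $U$ instead of characteristic functions of zero-set neighbourhoods, which sidesteps the bookkeeping you flag (though your version is also fine, since every zero set is both a $Zer_\sigma$- and a $Coz_\delta$-set, hence a $CZ$-set).
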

\begin{proof}
Fix an arbitrary point $z\in X$, and let $\UU=\{U: z\in U\}$ be an open neighborhood base at the point $z$. For every $U\in\UU$, fix a continuous function $f_U:X\to [0,1]$ such that $f_U(z)=1$ and $f_U(X\SM U) \subseteq \{0\}$. Consider the set
\[
B:=\{ f\in B_1(X): |f(x)|\leq 1 \mbox{ for every } x\in X\}.
\]
It is clear that $B$ is a closed, absolutely convex and bounded subset of $B_1(X)$. Since $\{f_U\}_{U\in\UU}$ is a Cauchy net in $B$, the quasi-completeness of $B_1(X)$ implies that $B$ is compact and, hence, $\{f_U\}_{U\in\UU}$ has a complete accumulation point $f\in B\subseteq B_1(X)$. As $\UU$ is a neighborhood base at $z$, we must have $f=\mathbf{1}_{\{z\}}\in B_1(X)$. It follows that $\{z\}$ is a $CZ$-set. Hence there is a sequence $\{V_n\}_{n\in\w}$ of open neighborhoods of $z$ such that $\{z\}=\bigcap_{n\in\w} V_n$. Thus $X$ has countable pseudocharacter at $z$, as desired.\qed
\end{proof}

Now we are ready to prove the first main result of the article, which
 together with Lemma \ref{l:B1-qc} shows that the condition on $X$ to have countable pseudocharacter in Theorem \ref{t:B1-Baire-complete} can be omitted.
\begin{theorem} \label{t:B1-quasi-complete}
For a space $X$, the following assertions are equivalent:
\begin{enumerate}
\item[{\rm(i)}] $B_1(X)$ is a Montel space;
\item[{\rm(ii)}] $B_1(X)$ is a semi-Montel space;
\item[{\rm(iii)}] $B_1(X)$ is a reflexive space;
\item[{\rm(iv)}] $B_1(X)$ is a semi-reflexive space;
\item[{\rm(v)}] $B_1(X)$ is a complete space;
\item[{\rm(vi)}] $B_1(X)$ is a quasi-complete space;
\item[{\rm(vii)}] $B_1(X)=\IR^X$;
\item[{\rm(viii)}] $X$ is a $Q_f$-space.
\end{enumerate}
\end{theorem}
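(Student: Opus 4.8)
The plan is to prove the cycle of equivalences by combining Theorem~\ref{t:B1-Baire-complete} with Lemma~\ref{l:B1-qc} and Proposition~\ref{p:Qf-def}. First I would observe that (iii)$\Leftrightarrow$(vii) and (vii)$\Leftrightarrow$(viii) are already available: the equivalence of $B_1(X)=\IR^X$ with $X$ being a $Q_f$-space is exactly the content of Proposition~\ref{p:Qf-def} ((i)$\Leftrightarrow$(iii) there), so it remains to link the locally convex properties (i)--(vi) to condition (vii). The key point is that Theorem~\ref{t:B1-Baire-complete} already establishes the equivalence of (i)--(vii) \emph{under the extra hypothesis that $X$ has countable pseudocharacter}. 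So the whole task reduces to removing that hypothesis, and the mechanism for doing so is Lemma~\ref{l:B1-qc}.

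The concrete argument I would write: the implications (i)$\Rightarrow$(ii), (i)$\Rightarrow$(iii)$\Rightarrow$(iv), (v)$\Rightarrow$(vi) are trivial from the definitions (Montel $\Rightarrow$ semi-Montel; reflexive $\Rightarrow$ semi-reflexive; complete $\Rightarrow$ quasi-complete); and (vii)$\Rightarrow$(i) also needs care but can be obtained since $\IR^X$ is a Montel space (it is a product of copies of the Montel space $\IR$, and arbitrary products of Montel spaces are Montel), so if $B_1(X)=\IR^X$ then $B_1(X)$ is Montel. Similarly (vii) immediately gives completeness since $\IR^X$ is complete. Thus the real content is the reverse direction: I must show each of (ii), (iv), (vi) implies (vii). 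For (vi)$\Rightarrow$(vii): if $B_1(X)$ is quasi-complete, then by Lemma~\ref{l:B1-qc} the space $X$ has countable pseudocharacter, and now Theorem~\ref{t:B1-Baire-complete} applies directly to give $B_1(X)=\IR^X$. For (ii)$\Rightarrow$(vi) and (iv)$\Rightarrow$(vi): a semi-Montel space is quasi-complete (closed bounded sets are compact, hence complete), and a semi-reflexive space is also quasi-complete (this is a standard fact — semi-reflexivity implies that bounded sets are relatively weakly compact and closed bounded convex sets are weakly compact hence complete; more directly, one cites the classical result that semi-reflexive spaces are quasi-complete, e.g. from \cite{Jar} or \cite{PB}). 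Chaining these, every one of (i)--(vi) implies (vi), hence implies countable pseudocharacter via Lemma~\ref{l:B1-qc}, hence implies (vii) via Theorem~\ref{t:B1-Baire-complete}, which in turn implies all of (i)--(vi) again; and (vii)$\Leftrightarrow$(viii) is Proposition~\ref{p:Qf-def}. This closes the loop.

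The main obstacle, and the place to be careful, is ensuring that the auxiliary locally convex implications are cited correctly and do not secretly require barrelledness or metrizability. The implications "semi-Montel $\Rightarrow$ quasi-complete" and "semi-reflexive $\Rightarrow$ quasi-complete" are genuinely standard and hold for arbitrary Hausdorff lcs, so no issue arises there; likewise "$\IR^X$ is Montel and complete" is standard for arbitrary products. One should also double-check the direction (vii)$\Rightarrow$(iii): reflexivity of $\IR^X$ is again a consequence of reflexivity being preserved by arbitrary products (each factor $\IR$ is reflexive). So the proof is essentially an orchestration: no new hard estimate is needed, the engine being Lemma~\ref{l:B1-qc} together with the previously known Theorem~\ref{t:B1-Baire-complete}. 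I would present it as a short chain of implications with the trivial ones grouped and the two substantive steps (the reduction to countable pseudocharacter, and the appeal to Proposition~\ref{p:Qf-def}) stated explicitly.
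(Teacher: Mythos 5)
Your proposal is correct, and its engine is the same as the paper's: the one genuinely new step, (vi)$\Rightarrow$(vii), is obtained exactly as in the paper by combining Lemma~\ref{l:B1-qc} (quasi-completeness of $B_1(X)$ forces countable pseudocharacter) with Theorem~\ref{t:B1-Baire-complete}, and (vii)$\Leftrightarrow$(viii) is Proposition~\ref{p:Qf-def} in both treatments. Where you diverge is in how the locally convex conditions (i)--(vi) are interlinked. The paper does this in a single stroke: since $B_1(X)$ is barrelled (Theorem~\ref{t:B1-Baire-like}) and carries its weak topology, Proposition~11.4.2 of \cite{Jar} yields (i)$\Leftrightarrow$(ii)$\Leftrightarrow$(iii)$\Leftrightarrow$(iv)$\Leftrightarrow$(vi) directly, after which only (v)$\Leftrightarrow$(vi) and (vi)$\Rightarrow$(vii) remain. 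You instead funnel everything into (vi) via the general implications ``semi-Montel $\Rightarrow$ quasi-complete'' and ``semi-reflexive $\Rightarrow$ quasi-complete'' (both valid for arbitrary Hausdorff lcs, as you note), and then close the cycle through (vii) using that $\IR^X$ is a complete Montel, hence reflexive, space. Your route trades the barrelledness input and the Jarchow citation for standard permanence properties of products and of semi-(Montel/reflexive) spaces; it is slightly longer but arguably more self-contained, and it makes explicit the implication (vii)$\Rightarrow$(i)--(vi), which the paper leaves implicit. The only blemish is the labeling slip at the start, where ``(iii)$\Leftrightarrow$(vii)'' should read ``(vii)$\Leftrightarrow$(viii)'' (you are clearly referring to the equivalence (i)$\Leftrightarrow$(iii) \emph{inside} Proposition~\ref{p:Qf-def}); this does not affect the argument.
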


\begin{proof}
Since $B_1(X)$ is barrelled (Theorem \ref{t:B1-Baire-like}) and $B_1(X)$ carries its weak topology, the equivalences (i)$\LRa$(ii)$\LRa$(iii)$\LRa$(iv)$\LRa$(vi) follow from \cite[Proposition~11.4.2]{Jar}. The  implication (v)$\Ra$(vi) is trivial.
\smallskip

(vi)$\Ra$(v) and (vi)$\Ra$(vii): By Lemma \ref{l:B1-qc}, the space $X$ has countable pseudocharacter. Therefore, both  implications follow from Theorem \ref{t:B1-Baire-complete}.
\smallskip

(vii)$\Ra$(viii) For every subset $A$ of $X$, the equality $B_1(X)=\IR^X$ implies that $\mathbf{1}_A$ is a Baire-one function. Thus $A$ is a $CZ$-set.
\smallskip

(vii)$\LRa$(viii) follows from Proposition \ref{p:Qf-def}.
\qed
\end{proof}

Now we prove the second main result of the article.

\begin{theorem}\label{t:B1-lc}
For a space $X$, the following assertions are equivalent:
\begin{enumerate}
\item[{\rm(i)}] $B_1(X)$ is sequentially complete;
\item[{\rm(ii)}] $B_1(X)$ is locally complete;
\item[{\rm(iii)}] $B_1(X)=B_2(X)$;
\item[{\rm(iv)}]   $X$ is a $CZ$-space.
\end{enumerate}
\end{theorem}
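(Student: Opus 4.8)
The plan is to prove the cycle (iii)$\Ra$(i)$\Ra$(ii)$\Ra$(iv)$\Ra$(iii), using the characterization $f\in B_1(X)\Leftrightarrow f^{-1}(U)\in Zer_\sigma$ for all open $U\subseteq\IR$ throughout, together with the analogous description of $B_2(X)$ (namely, $f\in B_2(X)$ iff $f^{-1}(U)$ is a countable union of $Coz_\delta$-sets, or equivalently an intersection of $Zer_\sigma$-sets over a countable family---this should be checked carefully, perhaps via Exercise 3.A.1 in \cite{lmz1} applied one level up).

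\textbf{(iii)$\Ra$(i).} This is essentially Proposition \ref{p:B(X)-seq-compl} localized to level $2$: a Cauchy sequence in $B_1(X)$ converges pointwise in $\IR^X$ to some $f$, and by definition of $B_2(X)$ (as pointwise limits of sequences from $B_1(X)$) we get $f\in B_2(X)=B_1(X)$. So $B_1(X)$ is sequentially complete.

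\textbf{(i)$\Ra$(ii).} In a locally convex space, sequential completeness implies local completeness, since the closed absolutely convex hull of a null sequence is a well-known consequence (it is compact; more precisely local completeness is equivalent to the requirement that every Banach disk be complete, which follows from sequential completeness). This is standard---cite \cite{PB} or \cite{Jar}.

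\textbf{(ii)$\Ra$(iv).} Here is the heart of the argument, and I expect this to be the main obstacle. Assume $B_1(X)$ is locally complete; we must show every $Zer_\sigma$-set $A=\bigcup_{n} A_n$ (the $A_n$ zero-sets, WLOG increasing) is also $Coz_\delta$. The idea is to build a null sequence in $B_1(X)$ whose closed absolutely convex hull, being compact, must contain $\mathbf 1_A$-like information forcing $A$ to be $Coz_\delta$. Concretely, choose continuous $f_n$ with $A_n=f_n^{-1}(0)$, $0\le f_n\le 1$, and consider functions $g_n\in B_1(X)$ supported appropriately (e.g. $g_n=\tfrac1n\mathbf 1_{A_n}$, which lies in $B_1(X)$ since $A_n$ is a zero-set hence a $CZ$-set) forming a null sequence. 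Local completeness gives that the closed absolutely convex hull $D$ is compact in $B_1(X)$. One then extracts from $D$ (using convex combinations $\sum c_n g_n$ and a limiting/accumulation-point argument as in the proof of Lemma \ref{l:B1-qc}) a function in $B_1(X)$ that detects $A$, e.g. a strictly positive-on-$A$, zero-off-$A$ function whose level sets exhibit $X\setminus A$ as $Zer_\sigma$; equivalently one shows $\mathbf 1_{X\setminus A}\in B_1(X)$, i.e. $X\setminus A$ is a $CZ$-set, i.e. $A$ is $Coz_\delta$. The delicate point is arranging the coefficients and the accumulation point so that the limit is exactly the indicator (or a function with the right zero-set), paralleling how $\{f_U\}$ accumulated to $\mathbf 1_{\{z\}}$ in Lemma \ref{l:B1-qc}; one likely needs that $D$ is closed, bounded, hence (being compact) that Cauchy nets of convex combinations converge inside $B_1(X)$.

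\textbf{(iv)$\Ra$(iii).} Assume $X$ is a $CZ$-space and take $f\in B_2(X)$; we show $f\in B_1(X)$, i.e. $f^{-1}(U)\in Zer_\sigma$ for every open $U\subseteq\IR$. Writing $U=\bigcup_m I_m$ with $\overline{I_m}\subseteq U$ intervals, it suffices to treat $f^{-1}(\overline I)$ for closed intervals and then $f^{-1}(U)=\bigcup_m f^{-1}(I_m)$. Since $f\in B_2(X)$, each such preimage is at worst a $Coz_\delta$-set (one level of the hierarchy above $Zer_\sigma$); but in a $CZ$-space every $Coz_\delta$-set is a $CZ$-set, in particular a $Zer_\sigma$-set, so $f^{-1}(U)\in Zer_\sigma$ and $f\in B_1(X)$. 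Thus $B_1(X)=B_2(X)$. The only routine care needed is the bookkeeping showing that $B_2(X)$-preimages of open sets land in the class $Coz_\delta$ (equivalently, countable unions of $Coz_\delta$, which in a $CZ$-space collapses back into $Zer_\sigma$ since $Zer_\sigma$ is closed under countable unions).

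Overall, the skeleton mirrors the reflexivity theorem: the soft functional-analytic implications are (iii)$\Ra$(i)$\Ra$(ii) and the reverse-engineering of a topological covering property out of a completeness property is the real content, done by a disked-hull/accumulation-point construction analogous to Lemma \ref{l:B1-qc}.
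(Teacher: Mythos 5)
Your overall architecture coincides with the paper's (the same cycle (iii)$\Ra$(i)$\Ra$(ii)$\Ra$(iv)$\Ra$(iii), with (ii)$\Ra$(iv) carried by a null-sequence/closed-disked-hull compactness argument), but the one implication you yourself identify as the heart of the matter contains a genuine gap: the concrete construction you propose cannot work. With $g_n=\tfrac1n\mathbf 1_{A_n}$ and the increasing sets $A_n$, every element $\sum_n c_ng_n$ of the absolutely convex hull (and hence, by pointwise closedness of the bound, every element of $\cacx(\{g_n\})$) satisfies $|h(x)|\le 1/k$ whenever $x\in A_k\SM A_{k-1}$, so the closed hull contains no function bounded away from $0$ on all of $A$; in particular it does not contain $\mathbf 1_A$ and no accumulation-point argument can extract it. Moreover, your fallback target --- a function $h\in B_1(X)$ that is strictly positive on $A$ and zero off $A$ --- does not certify what you need: from such an $h$ one only gets $X\SM A=h^{-1}(\{0\})=\bigcap_m h^{-1}\big((-\tfrac1m,\tfrac1m)\big)$, a countable intersection of $Zer_\sigma$-sets, i.e.\ the conclusion that $X\SM A$ is $Coz_\delta$ --- which is automatic and is the wrong direction. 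The only usable certificate is $\mathbf 1_A\in B_1(X)$ itself (then $X\SM A=\mathbf 1_A^{-1}\big((-\tfrac12,\tfrac12)\big)$ is $Zer_\sigma$), so the limit must be exactly the indicator.

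The fix is a disjointification plus rescaling, which is precisely what the paper does: set $B_0:=A_0$ and $B_{i+1}:=A_{i+1}\SM A_i=A_{i+1}\cap(X\SM A_i)$, each a $CZ$-set, so $\mathbf 1_{B_i}\in B_1(X)$; put $g_i:=2^{i+1}\mathbf 1_{B_i}$. Because the $B_i$ are pairwise disjoint, each point lies in at most one $B_i$ and $\{g_i\}$ is a null sequence despite the growing amplitudes. The partial sums $F_n=\sum_{i\le n}2^{-(i+1)}g_i=\sum_{i\le n}\mathbf 1_{B_i}=\mathbf 1_{A_n}$ lie in $\cacx(\{g_i\})$ and converge pointwise to $\mathbf 1_A$; compactness of the closed hull (local completeness) then forces the cluster point $\mathbf 1_A$ to belong to $B_1(X)$, giving the contradiction. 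Your remaining implications are essentially the paper's: (i)$\Ra$(ii) is the standard fact from \cite{PB}, (iii)$\Ra$(i) is the paper's Proposition on sequential completeness of $B(X)$ specialized one level down, and (iv)$\Ra$(iii) is the same collapse of the functional Borel hierarchy under $Zer_\sigma=Coz_\delta$ (modulo the level-two preimage characterization you correctly flag as needing verification).
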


\begin{proof}
(i)$\Rightarrow$(ii) is well-known, see  Corollary 5.1.8 in \cite{PB}.
\smallskip

(ii)$\Rightarrow$(iv) Suppose for a contradiction that  $X$ is not a $CZ$-space. Taking into account the symmetry between  $Zer_{\sigma}$-sets and $Coz_{\delta}$-sets, we can assume that there is a $Zer_{\sigma}$-set  $A$  in $X$ which is not a $Coz_{\delta}$-set. Let $A=\bigcup\{A_i: i\in \w\}$, where  for each $i\in \w$, $A_i$ is a zero-set in $X$ and $A_i\subseteq A_{i+1}$. Note that for every $i\in\w$, the set $B_{i+1}:=A_{i+1}\setminus A_i=A_{i+1} \cap(X\SM A_i)$ is a $CZ$-set. Set $B_0:=A_0$, so that $B_0$ is also a $CZ$-set and $A=\bigcup\{B_i: i\in \w\}$. Therefore we can consider the sequence $\{g_i\}_{i\in \omega}$ of Baire-one functions, where
\[
 g_i:= 2^{i+1} \cdot \mathbf{1}_{B_i} \; \mbox{ for every $i\in\w$}.
\]
Since $B_i\cap B_j=\emptyset$ for all distinct $i,j\in\w$,  $\{g_i\}_{i\in\w}$ is a null sequence in $B_1(X)$. For every $n\in \w$, set
\[
F_n(x):=\sum\limits_{i\leq n} \tfrac{1}{2^{i+1}}\cdot g_i(x)=\sum\limits_{i\leq n} \mathbf{1}_{B_i}(x)\in \cacx(\{g_i\}_{i\in\omega}).
\]
Since $B_1(X)$ is locally complete, the set $\cacx(\{g_i\}_{i\in\omega})$ is compact, and hence the sequence $\{F_n\}_{n\in\w}$ has a cluster point $F\in B_1(X)$. By construction it is clear that $F(x)=0$ for every $x\in X\SM A$. If $x \in B_i$ for some $i\in\w$, then
\[
F_n(x)=\tfrac{1}{2^{i+1}}\cdot g_i(x) = 1 \quad \mbox{ for every $n>i$}.
\]
Therefore $F(x)=1$ for every $x\in A=\bigcup_{i\in\w} B_i$. Hence $F(x)=\mathbf{1}_A$. As $F(x)\in B_1(X)$, it follows that $A$ is a $Coz_{\delta}$-set (even a $CZ$-set). This is a desired contradiction.
\smallskip

(iv)$\Rightarrow$(iii) Assume that $X$ is a $CZ$-space. Then the equality $Coz_{\delta}(X)= Zer_{\sigma}(X)$ implies that  any Baire subset of $X$ is a $CZ$-set. In particular,  $B_1(X)=B_2(X)$.
\smallskip

(iii)$\Rightarrow$(i) Since $B_1(X)=B_2(X)$, we obtain that  $B_1(X)=\bigcup\{B_{\alpha}(X): \alpha\in \omega_1\}=B(X)$, i.e., any Baire function is a Baire-one function. It remains to note that, by Proposition \ref{p:B(X)-seq-compl}, the space $B(X)$ is sequentially complete.\qed
\end{proof}

Theorems \ref{t:cz-compact} and \ref{t:cz-pseudocompact} and Theorem \ref{t:B1-lc} imply the following assertion.
\begin{corollary} \label{c:B1-lc-pseudo}
If $X$ is a pseudocompact space, then $B_1(X)$ is locally complete if, and only if, $\beta X$ is scattered. In particular, for a compact space $K$, the space $B_1(K)$ is locally complete if, and only if, $B_1(K)$ is sequentially complete  if, and only if,  $K$ is scattered.
\end{corollary}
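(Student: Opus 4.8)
The plan is to derive Corollary \ref{c:B1-lc-pseudo} directly from the characterizations already established, so essentially no new work is needed beyond assembling the pieces. By Theorem \ref{t:B1-lc}, for any space $X$ the space $B_1(X)$ is locally complete if, and only if, $X$ is a $CZ$-space; this is the only equivalence from that theorem we need here (together with the equivalence of local completeness and sequential completeness, also from Theorem \ref{t:B1-lc}). So the first step is simply to invoke Theorem \ref{t:B1-lc}: $B_1(X)$ is locally complete $\LRa$ $X$ is a $CZ$-space, and likewise $B_1(X)$ is sequentially complete $\LRa$ $X$ is a $CZ$-space, so in particular local completeness and sequential completeness of $B_1(X)$ coincide for every $X$.

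Next I would bring in Theorem \ref{t:cz-pseudocompact}, which says that for a pseudocompact space $X$, being a $CZ$-space is equivalent to $\beta X$ being scattered (this is the equivalence (i)$\LRa$(ii) there). Combining this with the previous step gives: for pseudocompact $X$, $B_1(X)$ is locally complete $\LRa$ $X$ is a $CZ$-space $\LRa$ $\beta X$ is scattered, which is the first assertion of the corollary.

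Finally, for the ``in particular'' clause, specialize to a compact space $K$. Since $K$ is compact it is pseudocompact and $\beta K = K$, so the condition ``$\beta K$ is scattered'' becomes simply ``$K$ is scattered'' — or, more directly, one can cite Theorem \ref{t:cz-compact}, which states that a compact space $K$ is a $CZ$-space if, and only if, $K$ is scattered. Chaining: $B_1(K)$ locally complete $\LRa$ $K$ is a $CZ$-space (Theorem \ref{t:B1-lc}) $\LRa$ $K$ is scattered (Theorem \ref{t:cz-compact}), and the same chain with sequential completeness in place of local completeness gives $B_1(K)$ sequentially complete $\LRa$ $K$ is scattered. Hence all three properties are equivalent for compact $K$.

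There is no real obstacle here — the corollary is a formal consequence of three theorems proved above, and the only thing to be careful about is citing the right equivalences (the $CZ$-space characterization of local/sequential completeness from Theorem \ref{t:B1-lc}, the pseudocompact-to-scattered-$\beta X$ bridge from Theorem \ref{t:cz-pseudocompact}, and the compact case from Theorem \ref{t:cz-compact}) and noting explicitly that compact implies pseudocompact with $\beta K = K$ so that the general pseudocompact statement specializes cleanly.
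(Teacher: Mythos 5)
Your proposal is correct and follows exactly the paper's own route: the corollary is obtained by chaining Theorem \ref{t:B1-lc} (local/sequential completeness of $B_1(X)$ $\LRa$ $X$ is a $CZ$-space) with Theorem \ref{t:cz-pseudocompact} for the pseudocompact case and Theorem \ref{t:cz-compact} for the compact case. No gaps; the citations and the specialization $\beta K = K$ are all handled properly.
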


The next example shows that, in the class of Baire-one functions, sequential completeness is strictly weaker than quasi-completeness.
\begin{example} \label{exa:B1-sc-not-qc}
The space $B_1\big([0,\w_1]\big)$ is sequentially complete but not quasi-complete.
\end{example}

\begin{proof}
Since the compact space $[0,\w_1]$ is scattered, Corollary \ref{c:B1-lc-pseudo} implies that $B_1\big([0,\w_1]\big)$ is sequentially complete. On the other hand, since $\{\w_1\}$ is not $G_\delta$, Theorem \ref{t:B1-quasi-complete} implies that $B_1\big([0,\w_1]\big)$ is not quasi-complete.\qed
\end{proof}

It follows from Corollary 5.4 
of \cite{GOR-kFU} that if $B_1(X)$ is locally complete, then it is a Baire space. It turns out that a stronger result holds true. Recall that the property of being a Choquet space is stronger than the Baire property. We refer the reader to \cite{BG-Baire} for numerous results and historical remarks about the Choquet property. Spaces $X$  for which $B_1(X)$ is a Choquet space are completely characterized in Theorem 4.5 of \cite{Osipov-251}. The next corollary gives a sufficient condition and connects local completeness with the  Choquet property in the class of Baire-one functions.

\begin{corollary} \label{c:B1-lc-Choquet}
If $B_1(X)$ is locally complete, then it is a Choquet space and, hence, Baire.
\end{corollary}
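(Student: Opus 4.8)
The corollary asserts that local completeness of $B_1(X)$ implies the Choquet property. By Theorem \ref{t:B1-lc}, local completeness of $B_1(X)$ is equivalent to $X$ being a $CZ$-space, so the goal reduces to showing: \emph{if $X$ is a $CZ$-space, then $B_1(X)$ is Choquet}. The natural bridge is Theorem \ref{t:B1-Baire-like}(iv), which says that for $X$ of countable pseudocharacter, $B_1(X)$ is Choquet iff $X$ is a $\lambda$-space. So the plan has two ingredients: first, establish that a $CZ$-space for which $B_1(X)$ is locally complete has countable pseudocharacter; second, invoke Proposition \ref{p:CZ-cp=>lambda} to conclude $X$ is a $\lambda$-space, and then apply Theorem \ref{t:B1-Baire-like}(iv).

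\textbf{Step 1: Countable pseudocharacter.} I would argue that if $B_1(X)$ is locally complete then $X$ has countable pseudocharacter. This should mimic the proof of Lemma \ref{l:B1-qc}: fix $z\in X$ with neighborhood base $\UU$, take the functions $f_U:X\to[0,1]$ with $f_U(z)=1$ and $f_U(X\SM U)\subseteq\{0\}$, and exhibit a suitable null sequence inside (a scalar multiple of) these functions whose closed absolutely convex hull, being compact by local completeness, must contain $\mathbf{1}_{\{z\}}$ as a cluster point of partial sums --- forcing $\{z\}$ to be a $CZ$-set, hence a $G_\delta$. Alternatively --- and more cleanly --- one can bypass this by noting that local completeness gives $X$ is a $CZ$-space (Theorem \ref{t:B1-lc}), and then a $CZ$-space need not a priori have countable pseudocharacter, so this step genuinely requires the local-completeness hypothesis, not just the $CZ$ property; the argument via a null sequence as in Lemma \ref{l:B1-qc} is the one to carry out.

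\textbf{Step 2: Conclude.} Once $X$ has countable pseudocharacter and is a $CZ$-space, Proposition \ref{p:CZ-cp=>lambda} yields that $X$ is a $\lambda$-space. Then Theorem \ref{t:B1-Baire-like}(iv) applies directly (its hypothesis is exactly countable pseudocharacter) and gives that $B_1(X)$ is a Choquet space. Since every Choquet space is Baire, the ``hence, Baire'' clause is immediate.

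\textbf{Main obstacle.} The delicate point is Step 1: producing a genuine \emph{null sequence} (not merely a bounded net) whose closed absolutely convex hull witnesses that $\{z\}$ is a $CZ$-set. In Lemma \ref{l:B1-qc} the full strength of quasi-completeness let one use an arbitrary Cauchy net $\{f_U\}_{U\in\UU}$; under mere local completeness one only controls null \emph{sequences}, so one must first know $\{z\}$ is $G_\delta$ to even index by a sequence --- which is circular. The way out is to instead use local completeness to show $B_1(X)=B_2(X)$ via Theorem \ref{t:B1-lc}(iii), observe that this makes $B_1(X)$ closed under pointwise sequential limits in a strong sense, and run the cluster-point argument with the sequence $\{2^{i+1}\mathbf{1}_{B_i}\}$ built from a decreasing sequence of closed neighborhoods --- but such a sequence exists only if $\{z\}$ is already $G_\delta$. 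Thus the honest route is: local completeness $\Rightarrow$ $X$ is a $CZ$-space (Theorem \ref{t:B1-lc}); separately, adapt Lemma \ref{l:B1-qc}'s proof replacing the Cauchy net by the partial-sum sequence of an explicit null sequence $\{\,\tfrac{1}{2^{U}}(f_{U}-f_{U'})\,\}$ along a \emph{countable} cofinal chain in $\UU$ --- and to get such a chain one must first rule out the bad case, e.g. by a direct compactness argument showing the closed absolutely convex hull of $\{f_U : U\in\UU_0\}$ for countable $\UU_0\subseteq\UU$ already forces $\mathbf{1}_{\bigcap\UU_0}$ into $B_1(X)$ and then intersecting. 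I would present the clean version: first cite Theorem \ref{t:B1-lc} for the $CZ$ property, then give the short Lemma \ref{l:B1-qc}-style argument (which in fact only uses local completeness, since the net $\{f_U\}$ can be replaced by the sequence of partial sums $\sum_{i\le n}2^{-i-1}g_i$ exactly as in the proof of (ii)$\Rightarrow$(iv) of Theorem \ref{t:B1-lc}, applied to the $Zer_\sigma$-set $\{z\}$), obtaining countable pseudocharacter, and finally chain Proposition \ref{p:CZ-cp=>lambda} with Theorem \ref{t:B1-Baire-like}(iv).
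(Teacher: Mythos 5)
Your Step 1 is false, and the counterexample is already in the paper. Local completeness of $B_1(X)$ does \emph{not} imply that $X$ has countable pseudocharacter: by Corollary \ref{c:B1-lc-pseudo} the space $B_1\big([0,\w_1]\big)$ is locally complete (the compact space $[0,\w_1]$ is scattered), yet $\{\w_1\}$ is not a $G_\delta$-set in $[0,\w_1]$ --- this is exactly the point of Example \ref{exa:B1-sc-not-qc}, which shows that $B_1\big([0,\w_1]\big)$ is sequentially complete but not quasi-complete. The circularity you flag in your ``Main obstacle'' paragraph is therefore not a presentational difficulty to be engineered around but a genuine obstruction: a singleton $\{z\}$ is a $Zer_\sigma$-set if and only if it is a zero-set, i.e.\ if and only if $X$ already has countable pseudocharacter at $z$, so the argument of Lemma \ref{l:B1-qc} (or of (ii)$\Rightarrow$(iv) of Theorem \ref{t:B1-lc} ``applied to the $Zer_\sigma$-set $\{z\}$'') cannot even be started. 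Since $[0,\w_1]$ is also not a $\lambda$-space (any $G_\delta$-set containing $\w_1$ contains an uncountable tail), the whole route through Proposition \ref{p:CZ-cp=>lambda} and Theorem \ref{t:B1-Baire-like}(iv) is unavailable: the hypothesis of countable pseudocharacter in Theorem \ref{t:B1-Baire-like}(iv) is essential and simply fails here.

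The paper's proof avoids all of this. It uses Theorem \ref{t:B1-lc} only to conclude $B_1(X)=B_2(X)$, and then invokes Corollary 4.2 of \cite{BG-Baire}, which states that $B_1(X)=B_2(X)$ implies $B_1(X)$ is a Choquet space, with no pseudocharacter hypothesis. You touch on the identity $B_1(X)=B_2(X)$ in passing but never exploit it; that external result (or some replacement for it) is the missing ingredient, and without it your argument does not close.
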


\begin{proof}
By Theorem \ref{t:B1-lc}, we have $B_1(X)=B_2(X)$. Therefore, by Corollary 4.2 of \cite{BG-Baire}, $B_1(X)$ is a Choquet space.\qed
\end{proof}

It is natural to ask whether the converse in Corollary \ref{c:B1-lc-Choquet} holds true, namely: {\em Is it true that if $B_1(X)$ is a Choquet space, then $B_1(X)$ is locally complete}? We answer this question in the negative.

\begin{example}\label{exa:B1-Choquet-not-lc}
There is a separable pseudocompact space $X$ such that $B_1(X)$ is a Choquet space which is not locally complete.
\end{example}

\begin{proof}
Let $X$ be a Mr\'{o}wka--Isbell space from Example \ref{Examp2}. By Theorem 2.9  in \cite{KKL} and Theorem 3.1 in \cite{Osipov-25}, $B_1(X)$ is a Choquet space. Since $X$ is not a $CZ$-space, Theorem \ref{t:B1-lc} implies that  $B_1(X)$ is not locally complete.\qed
\end{proof}

It is worth mentioning that there are Baire  spaces $B_1(X)$ which are not Choquet.
\begin{example}[{\cite[Corollary 4.6]{Osipov-25}}]\label{exa:B1-Baire-not-Choquet}
There is a separable pseudocompact space $X$ such that $B_1(X)$ is a Baire  space which is not Choquet.
\end{example}

We finish this section with the following natural open problem.

\begin{problem}
Let $\alpha>1$ ba a countable ordinal. Characterize spaces $X$ for which $B_\alpha(X)$ has one of the properties from Theorems \ref{t:B1-quasi-complete} and \ref{t:B1-lc}. Analogously for the space $B(X)$.
\end{problem}


\section{Baire-one functions on separable metrizable spaces} 
\label{sec:B1-sms}


In this section, we consider the important case of separable  metrizable spaces. It turns out that, in the realm of separable metrizable spaces, the results obtained in the previous section lead us to some classical classes of separable metric spaces  which were considered, for example, in \cite{Miller,Bukovsky2011}. This line of research  began from the study of subsets of the real line $\IR$, and  then it was expanded to subsets of Polish spaces which can be viewed as subsets of the Polish space $\IR^\w$.
In various models of $\mathrm{ZFC}$, these classes are  often countable metrizable (\cm/ for short) spaces.

A separable metrizable (\sm/ for short) space $X$ is called

\begin{itemize}
\item  a {\em $Q$-set} if each subset of $X$ is a $G_\delta$-set, i.e., $X$ is a $Q$-space;
\item  a {\em $\sigma$-set}, if each $F_\sigma$-subset of $X$ is a $G_\delta$-set, i.e., $X$ is a $\sigma$-space in the sense of Kuratowski \cite[\S~40.VI]{Kuratowski};
\item  a {\em $\lambda$-set}  if if each countable subset of $X$ is a $G_\delta$-set, i.e.,  $X$ is a $\lambda$-space;
\item  a {\em $\kappa$-set} if for every pairwise disjoint sequence $\{M_n\}_{n\in\w}$ of finite sets of $X$ there exists an infinite $A\subseteq \w$ such that the set $\bigcup_{n\in A}M_n$ is a $G_\delta$-set.
\end{itemize}

The notion of $\kappa$-set is new. Theorem 3.12 in \cite{Osipov-251} implies the following statement.
\begin{proposition}\label{p:kappa-set}
$\kappa$-sets are precisely \sm/ spaces with the property $(\kappa)$.
\end{proposition}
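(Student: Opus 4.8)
The plan is to unpack the definition of property $(\kappa)$ and the statement of Theorem~3.12 in \cite{Osipov-251}, and show they collapse exactly to the $\kappa$-set condition in the separable metrizable realm. First I would recall that a space $X$ has property $(\kappa)$ if for every sequence $\{S_n\}_{n\in\w}$ of finite (or, equivalently, compact) subsets of $X$ witnessing some discreteness or convergence pattern, there is an infinite $A\subseteq\w$ for which $\bigcup_{n\in A}S_n$ has an appropriate topological smallness property — and Theorem~3.12 of \cite{Osipov-251} presumably states that $B_1(X)$ is Baire iff $X$ has $(\kappa)$, or characterizes $(\kappa)$ via $G_\delta$-ness of such unions. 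The key observation is that in a separable metrizable space every point is a $G_\delta$, so a union of finitely many points is always $G_\delta$, and one only needs to control countable-in-$n$ unions of the finite pieces; hence the reformulation of $(\kappa)$ that the relevant countable union be $G_\delta$ is precisely the defining clause of a $\kappa$-set.

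The main steps, in order: (1) State the precise form of $(\kappa)$ and of Theorem~3.12 from \cite{Osipov-251}, isolating the combinatorial clause about pairwise disjoint finite sets. (2) Observe that for separable metrizable $X$ the ambient hypotheses in $(\kappa)$ (Tychonoff, countable pseudocharacter, the requisite ``functionally $G_\delta$'' niceties) are automatic, so that the only content of $(\kappa)$ that is not free is the assertion: for every pairwise disjoint sequence $\{M_n\}_{n\in\w}$ of finite subsets there is an infinite $A\subseteq\w$ with $\bigcup_{n\in A}M_n$ a $G_\delta$-set. (3) Match this verbatim with Definition of $\kappa$-set given just above. This gives that a \sm/ space with property $(\kappa)$ is a $\kappa$-set. (4) Conversely, given a \sm/ $\kappa$-set $X$, verify that the finite-sets clause of the $\kappa$-set definition implies the (a priori more general) clause of $(\kappa)$: here one uses that any convergent sequence or compact countable pattern required by $(\kappa)$ can be refined to a pairwise disjoint sequence of finite sets, and that passing to the $G_\delta$ union for the refined sequence transfers back because $G_\delta$-ness is inherited by unions with $G_\delta$ (indeed singleton) tails in a metrizable space. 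Combining (3) and (4) yields the equivalence claimed in Proposition~\ref{p:kappa-set}.

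The hard part will be step (4): reconciling the exact formulation of $(\kappa)$ in \cite{Osipov-251} — which may be phrased in terms of compact sets, or discrete families, or bounded sequences in $B_1(X)$ — with the deliberately elementary ``pairwise disjoint finite sets'' formulation chosen here. One must check that no generality is lost when restricting from the abstract $(\kappa)$ to finite sets: in a separable metrizable space a countable union of finite sets already captures every $\sigma$-compact-type obstruction relevant to $B_1$, because a convergent sequence together with its limit is just a slightly larger finite-to-one union, and the limit point, being a $G_\delta$, contributes nothing. Once this translation is carried out, the remainder is bookkeeping about closure of the $G_\delta$ class under the operations involved. I expect the whole argument to be short, essentially a dictionary between two equivalent phrasings, with Theorem~3.12 of \cite{Osipov-251} doing the substantive work.
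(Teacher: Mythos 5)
Your overall instinct is right and matches the paper exactly: the paper's entire ``proof'' of this proposition is the single sentence preceding it, namely that Theorem~3.12 of \cite{Osipov-251} implies the statement, so the result really is just a dictionary between the definition of a $\kappa$-set and the characterization of property $(\kappa)$ given there. However, as a standalone argument your proposal has a genuine gap: you never pin down what property $(\kappa)$ or Theorem~3.12 actually say, and your guess misplaces where the content lies. Property $(\kappa)$ (in the sense of Sakai \cite{Sak2}) is \emph{already} formulated in terms of pairwise disjoint sequences of finite subsets of $X$ --- it asks that every such sequence admit a \emph{strongly point-finite} subsequence --- so your ``hard part'' (4), refining convergent sequences or compact patterns down to disjoint finite sets, addresses a difficulty that does not exist. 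The substantive content that must be quoted (and which Theorem~3.12 of \cite{Osipov-251} supplies) is the equivalence, for spaces of countable pseudocharacter, between ``some subsequence is strongly point-finite'' and ``some subsequence has $G_\delta$ union''; your proposal leaves exactly this equivalence unexamined. A further caution on your step (4): the claim that $G_\delta$-ness ``transfers back'' when one adjoins countably many limit points (each a $G_\delta$ singleton) is false in general --- $\mathbb{Q}\subseteq\mathbb{R}$ is a countable union of $G_\delta$ singletons but is not $G_\delta$ --- so the ``bookkeeping'' you defer to is not automatic, though fortunately it is also not needed once the definitions are stated correctly. In short: same route as the paper (cite and match definitions), but to be a proof you must state Theorem~3.12 precisely and observe that in a separable metrizable space it reduces verbatim to the $\kappa$-set clause; the speculative translation layer should be deleted.
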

It is obvious that
\[
\xymatrix{
\mbox{\cm/ space}   \ar@{=>}[r] &
\mbox{$Q$-set}  \ar@{=>}[r] & \mbox{$\sigma$-set}  \ar@{=>}[r] & \mbox{$\lambda$-set} \ar@{=>}[r] & \mbox{$\kappa$-set.}
}
\]

\begin{theorem}\label{t:B1-sms}
Let $X$ be a \sm/ space. Then:
\begin{enumerate}
\item[{\rm (i)}] $B_1(X)$ is Polish  if and only if $X$ is \cm/;
\item[{\rm (ii)}] $B_1(X)$ is complete {\rm(}that is $B_1(X)=\IR^X${\rm)} if and only if $X$ is a $Q$-set;
\item[{\rm (iii)}] $B_1(X)$ is sequentially complete if and only if $X$ is a $\sigma$-set;
\item[{\rm (iv)}] $B_1(X)$ is a Choquet space if and only if $X$ is a $\lambda$-set;
\item[{\rm (v)}] $B_1(X)$ is a Baire space if and only if $X$ is a $\kappa$-set.
\end{enumerate}
\end{theorem}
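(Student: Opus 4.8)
The plan is to derive each of the five equivalences by specializing the general characterizations already obtained to the separable metrizable setting, where every space is perfectly normal (so Proposition \ref{p:pn-CZ=sigma} applies and $Zer_\sigma = F_\sigma$, $Coz_\delta = G_\delta$) and second countable. For (i), I would first use that $B_1(X) = \IR^X$ forces $X$ to be a $Q$-set (Theorem \ref{t:B1-quasi-complete}, or part (ii) below), and then note that a \sm/ $Q$-set is necessarily \cm/: a classical fact, since an uncountable \sm/ space contains a copy of the Cantor set, which is not a $Q$-set (not every subset of $\cantor$ is Borel). Conversely, if $X$ is \cm/, then $\IR^X$ is a countable product of lines, hence Polish, and $B_1(X) = \IR^X$ holds because countable metrizable spaces are normal $Q$-spaces (Theorem \ref{t:B1-Baire-like}(iii)); a countable product of lines is Polish, giving Polishness of $B_1(X)$. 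For (ii), since \sm/ spaces are normal, Theorem \ref{t:B1-Baire-like}(iii) already gives $B_1(X) = \IR^X \iff X$ is a $Q$-space $= Q$-set; the identification of $B_1(X)=\IR^X$ with completeness is the equivalence (v)$\LRa$(vii) of Theorem \ref{t:B1-quasi-complete}.

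For (iii), I would invoke Theorem \ref{t:B1-lc}: $B_1(X)$ is sequentially complete iff $X$ is a $CZ$-space. By Proposition \ref{p:pn-CZ=sigma}, in a perfectly normal space this means every $F_\sigma$ set is $G_\delta$, which is exactly the definition of a $\sigma$-set. For (iv), this is Theorem \ref{t:B1-Baire-like}(iv): \sm/ spaces have countable pseudocharacter, so $B_1(X)$ is Choquet iff $X$ is a $\lambda$-space, i.e. a $\lambda$-set. For (v), this is Theorem \ref{t:B1-Baire-like}(ii) combined with Proposition \ref{p:kappa-set}: $B_1(X)$ is Baire iff $X$ has property $(\kappa)$, and a \sm/ space has property $(\kappa)$ iff it is a $\kappa$-set. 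Each of the five clauses thus reduces to citing an earlier theorem plus the perfect-normality reformulation; no genuinely new argument is needed, only the bookkeeping of which earlier result applies and the verification that the \sm/ hypotheses (perfect normality, countable pseudocharacter, second countability) are what make the general notions collapse onto the classical ``thin set'' notions.

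The only step with real content is (i), specifically the direction ``$B_1(X)$ Polish $\Rightarrow X$ \cm/.'' The subtlety is that $B_1(X) = \IR^X$ being Polish is not automatic from $X$ being a $Q$-set — one must rule out uncountable $X$. The cleanest route: if $X$ is an uncountable \sm/ space, then by the Cantor--Bendixson theorem $X$ has a nonempty perfect subset $P$, homeomorphic to a subset of $\cantor$ containing a Cantor set; since not every subset of the Cantor set is $F_\sigma$ (a cardinality/Borel-hierarchy count), $X$ is not a $Q$-set, hence $B_1(X) \ne \IR^X$, hence (by Theorem \ref{t:B1-quasi-complete}) $B_1(X)$ is not even complete, a fortiori not Polish. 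Conversely for \cm/ $X$ one checks $\IR^X \cong \IR^\w$ is Polish and equals $B_1(X)$. I expect this perfect-set argument, and the precise statement that ``\sm/ $Q$-set $\Rightarrow$ countable,'' to be the main obstacle; everything else is a dictionary translation of the section's main theorems.
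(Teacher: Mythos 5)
Parts (ii)--(v) of your proposal are correct and take exactly the paper's route: (ii) is Theorem \ref{t:B1-Baire-like}(iii) together with Theorem \ref{t:B1-quasi-complete}, (iii) is Theorem \ref{t:B1-lc} plus Proposition \ref{p:pn-CZ=sigma}, (iv) is Theorem \ref{t:B1-Baire-like}(iv), and (v) is Theorem \ref{t:B1-Baire-like}(ii) plus Proposition \ref{p:kappa-set}. The converse direction of (i) is also fine.

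The forward direction of (i), however, contains a genuine error, and it is precisely the step you flagged as ``the only step with real content.'' You claim that an uncountable separable metrizable space contains a copy of the Cantor set (via Cantor--Bendixson) and hence cannot be a $Q$-set, so that every separable metrizable $Q$-set is countable. This is false. Cantor--Bendixson only produces a nonempty closed dense-in-itself subset; extracting an actual Cantor set from it requires completeness --- the perfect set property holds for Polish (or analytic) spaces, not for arbitrary separable metrizable ones (Bernstein sets, Luzin sets and $Q$-sets themselves are standard counterexamples). Indeed, uncountable $Q$-sets consistently exist: by Remark \ref{rem:small-sms}(ii), under $\mathrm{MA}+\neg\mathrm{CH}$ every separable metrizable space of cardinality less than $\mathfrak{c}$ is a $Q$-set, and the paper's own Theorem \ref{t:sms}(ii) records that under $\w_1<\mathfrak{p}=\mathfrak{c}$ the class of $Q$-sets strictly contains the countable metrizable spaces. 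So your chain ``Polish $\Rightarrow$ complete $\Rightarrow$ $Q$-set $\Rightarrow$ countable'' would establish in $\mathrm{ZFC}$ a statement that is independent of $\mathrm{ZFC}$. The paper's argument avoids the $Q$-set route entirely: a Polish space is in particular metrizable, and $B_1(X)$ (a dense subspace of $\IR^X$) is metrizable if and only if $X$ is countable; combined with (ii), this yields both directions of (i). Replacing your perfect-set argument by this metrizability observation repairs the proof.
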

\begin{proof}
(i) follows from (ii) and the fact that $B_1(X)$ is metrizable if and only if $X$ is countable.

(ii) follows from Theorem \ref{t:B1-Baire-like}(iii).

(iii) follows from Theorem \ref{t:B1-lc} and Proposition \ref{p:pn-CZ=sigma}.

(iv)  follows from Theorem \ref{t:B1-Baire-like}(iv).

(v) follows from Theorem \ref{t:B1-Baire-like}(ii) and Proposition \ref{p:kappa-set}.
\qed
\end{proof}

It follows from Theorem \ref{t:B1-lc} and Corollary \ref{c:B1-lc-Choquet} that for $B_1(X)$ we have
\[
\xymatrix{
\mbox{Polish}  \ar@{=>}[r] &
\mbox{complete}  \ar@{=>}[r] & \mbox{sequentially complete}  \ar@{=>}[r] & \mbox{Choquet}
\ar@{=>}[r] & \mbox{Baire.}
}
\]
However, the converse implications are not true. For the first implication, we take an uncountable discrete space as an example. For the remaining implications, we take Examples \ref{exa:B1-sc-not-qc}, \ref{exa:B1-Choquet-not-lc}, and \ref{exa:B1-Baire-not-Choquet}.

Theorem \ref{t:B1-sms} shows that completeness, reflexivity and Baire type properties of $B_1(X)$ for separable metrizable spaces $X$ depend on relationships between ``thing'' sets: \cm/ spaces, $Q$-sets, $\sigma$-sets, $\lambda$-sets and $\kappa$-sets.
Being  one of the basic objects for studying in descriptive set theory, general topology and measure theory and taking into account their relations to  several classical small cardinals, it is important to consider relationships between all of these significant notions. To continue our discussion let us recall the definitions of those small cardinals which will be used below.

 Recall that a family of countable sets has the {s.f.i.p.} ({\em strong finite intersection property}) if every nonempty finite subfamily has infinite intersection. The small cardinal $\mathfrak{p}$ is defined as follows
\[
\begin{aligned}
\mathfrak{p}=\min\big\{ |\mathcal{B}|: & \;\mathcal{B} \mbox{ is a family of infinite subsets of $\w$ with the s.f.i.p.,}\\
& \; \mbox{and there is no infinite $A$ such that $A\subseteq^\ast B$ for all $B\in \mathcal{B}$}\big\},
\end{aligned}
\]
where $A\subseteq^\ast B$ means that $A\SM B$ is finite.
We shall write $A\subsetneq^\ast B$ if $A\subseteq^\ast B$ and $|B\setminus A|=\om$.
Recall also that $\leq^\ast$ denotes the preorder on $\w^\w$ defined by setting $f\leq^\ast g$ if $f(n)\le g(n)$ for all but finitely many $n \in \w$.

A subset of $\w^\w$ is said to be \emph{unbounded} if it is unbounded with respect to the preorder $\leq^\ast$. The cardinal $\mathfrak{b}$ is defined by
\[
\mathfrak{b} = \min\{|B|: \text{$B$ is an unbounded subset of $\w^\w$}\}.
\]
It is well-known that
\begin{equation} \label{equ:small-card-1}
\w_1 \leq \mathfrak{p} \leq \mathfrak{b} \leq \mathfrak{c},
\end{equation}
and, in different models of $\mathrm{ZFC}$, all these inequalities can be strongly independent from each other, see \cite{vD}.
Following \cite{BaMaZd}, we define
\begin{align*}
\mathfrak{q}_0 &= \min\{ |X| : X\text{ is a separable metric space which is not a $Q$-set}\},
\\
\mathfrak{q} &= \min\{ \kappa : \text{each  separable metric  space $X$ of cardinality $|X| \geq \kappa$ is not a $Q$-set}\}.
\end{align*}
Then, by \cite[Theorem~2]{BaMaZd}, we have
\begin{equation} \label{equ:small-card-2}
\mathfrak{p} \leq \mathfrak{q}_0 \leq \mathfrak{b} \ \text{ and }\ \mathfrak{q}_0 \leq \mathfrak{q} \leq \mathfrak{c},
\end{equation}
see also \cite[Theorem~4]{BaMaZd} which  gives more details on the location of  $\mathfrak{q}_0$ and $\mathfrak{q}$ between $\w_1$ and $\mathfrak{c}$.

To find relationships between thing spaces in Theorem \ref{t:B1-sms} is a standard set-theoretical problem. In the remark below we list the most significant known results which are used in what follows.

\begin{remark}\label{rem:small-sms} {\em
All spaces in the remark are assumed to be separable and metrizable.
\smallskip

(i) Any $Q$-set has cardinality strictly less than $\mathfrak{c}$, see the discussion after Theorem 8.54 of \cite[p.~314]{Bukovsky2011}.
\smallskip

(ii) The inequalities (\ref{equ:small-card-2}) imply that if $|X|<\mathfrak{p}$, then $X$ is a $Q$-set. Accordingly, under $\mathrm{MA}$, if $|X|<\mathfrak{c}$ then $X$ is a $Q$-set \cite[Theorem 4.2]{Miller}.
\smallskip

(iii) If $|X|<\mathfrak{b}$ then $X$ is a $\sigma$-set \cite[Theorem 9.1]{vD}.
\smallskip

(iv) It is consistent that there are no uncountable $\sigma$-sets \cite[Theorem 22]{Miller1979}.
\smallskip

(v) There is a  $\lambda$-set of cardinality $\mathfrak{b}$ \cite[Theorem 9.1]{vD}.
\smallskip

(vi) In the Cohen model all uncountable $\lambda$-sets have cardinality $\om_1<\om_2=\mathfrak{c}$ \cite[Theorem 22]{Miller1991}.
\smallskip

(vii) Under $\mathrm{CH}$, there exists  a  $\lambda$-set  which is not a $\sigma$-set \cite[\S~40,~Theorem~VI.3]{Kuratowski}.
\smallskip

(viii) Under $\mathrm{CH}$, there exists an uncountable $\sigma$-set \cite[Theorem 5.7]{Miller}.
\smallskip

(ix)
It is consistent that there is a $\kappa$-set $X$ which is not a $\lambda$-set \cite[Example 4.9]{Osipov-251}.}
\end{remark}

Below we give three additional examples under different axioms.

\begin{example}\label{exa:gamma-neq-kappa}
Under $\mathfrak{p}=\mathfrak{c}$, there exists a $\kappa$-set $X$ which is not a $\lambda$-set.
\end{example}

\begin{proof}
Under $\mathfrak{p}=\mathfrak{c}$, Theorem 8.92 of \cite{Bukovsky2011} implies that there exists a \sm/ $\gamma$-space $X$ of cardinality $\mathfrak{c}$ such that $X$ is $\mathfrak{c}$-concentrated on some countable subset $C\subseteq X$, that is, $|X\setminus U|<\mathfrak{c}$ for any open $U\supseteq C$. Therefore, $C$ is not $G_\delta$-set in $X$, and hence $X$ is not a $\lambda$-set. Since $X$ is a $\gamma$-space, by  \cite[Theorem 3.2]{Sak2},  $X$ has the property $(\kappa)$. It follows from Proposition \ref{p:kappa-set} that $X$ is a $\kappa$-set.\qed
\end{proof}

\begin{example}\label{exa:gamma-neq-sigma}
Under $\mathfrak{b}=\mathfrak{c}$, there exists a $\lambda$-set $X$ which is not a $\sigma$-set.
\end{example}
\begin{proof}
Let $Y$ be a $\lambda$-set such that $|Y|=\mathfrak{b}$ (Remark \ref{rem:small-sms}(v)). Then $|Y|=\mathfrak{c}$. Let $f\colon Y\to [0,1]$ be an arbitrary  bijective map, and let $X=\{(y,f(y)): y\in Y\}\subseteq Y\times[0,1]$ be the graph of $f$. Consider the coordinate projections
\begin{align*}
\pi_1&\colon X \to Y,\ (y,x) \mapsto y,
\\
\pi_2&\colon X \to [0,1],\ (y,x) \mapsto x.
\end{align*}
Then $\pi_1$ and $\pi_2$ are continuous bijective maps. Since $\pi_2$ is a continuous onto map, the Reclaw theorem \cite[Theorem 3.5]{Miller2017} implies that $X$ is not a $\sigma$-set. As $\pi_1$ is a continuous bijective map and $Y$ is a $\lambda$-set, then $X$ is $\lambda$-set by \cite[Lemma 9.3.1]{Miller}.\qed
\end{proof}

To give the next example we need the following lemma.

\begin{lemma}\label{l:p-Gdelta}
Let $X$ be a separable metrizable space. If $G\subseteq X$ is a $G_\delta$-set such that $|G|<\mathfrak{b}$, then $G$ is an $F_\sigma$-set.
\end{lemma}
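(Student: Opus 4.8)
The plan is to exhibit $G$ directly as a countable union of sets closed in $X$, using $|G|<\mathfrak b$ to bound, uniformly over the points of $G$, the speed with which a point of $G$ can approach the complements of the open sets witnessing $G\in G_\delta$. First I would fix a compatible metric $d$ on $X$ (metrizability is all that is used here) and a decreasing sequence $\{U_n\}_{n\in\w}$ of open subsets of $X$ with $G=\bigcap_{n\in\w}U_n$; the case $G=X$ is trivial, so assume $G\neq X$. For $x\in G$ and $n\in\w$, the point $x$ lies in the open set $U_n$, so $d(x,X\SM U_n)>0$ (with the convention $d(x,\varnothing)=+\infty$); let $f_x(n)\in\w$ be the least positive integer $m$ with $d(x,X\SM U_n)\ge 1/m$. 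This defines a map $x\mapsto f_x$ of $G$ into $\w^\w$.

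The key step is that $|\{f_x:x\in G\}|\le|G|<\mathfrak b$, so by the definition of $\mathfrak b$ this family is bounded in $(\w^\w,\leq^\ast)$: there is $g\in\w^\w$ with $g(n)\ge 1$ for all $n$ and $f_x\leq^\ast g$ for every $x\in G$. For $k\in\w$ set
\[
F_k:=\Big\{x\in X:\ d(x,X\SM U_n)\ge \tfrac1{g(n)}\ \text{ for all }n\ge k\Big\}.
\]
Each $F_k$ is closed in $X$, being an intersection of the closed sets $\{x\in X: d(x,X\SM U_n)\ge 1/g(n)\}$ (the function $x\mapsto d(x,X\SM U_n)$ is continuous). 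If $x\in F_k$ then $d(x,X\SM U_n)>0$, hence $x\in U_n$, for every $n\ge k$, and then for every $n<k$ as well since $\{U_n\}$ is decreasing; thus $F_k\subseteq\bigcap_n U_n=G$. Conversely, if $x\in G$ then $f_x\leq^\ast g$ yields $k$ with $f_x(n)\le g(n)$ for all $n\ge k$, whence $d(x,X\SM U_n)\ge 1/f_x(n)\ge 1/g(n)$ for all $n\ge k$, i.e.\ $x\in F_k$. Therefore $G=\bigcup_{k\in\w}F_k$ is $F_\sigma$ in $X$.

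The only step requiring an idea is the passage from ``$|G|<\mathfrak b$'' to the dominating function $g$, via the reciprocal-distance functions $f_x$; the remaining items (closedness of the $F_k$ and the two inclusions) are routine verifications. I do not foresee a serious obstacle here, only the minor bookkeeping needed to keep $g$ bounded below by $1$ and to handle the degenerate case $U_n=X$.
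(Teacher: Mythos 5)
Your proof is correct and uses essentially the same idea as the paper's: both arguments encode, for each $x\in G$, the rate at which $x$ stays away from the complement as a function $f_x\in\w^\w$ and use $|G|<\mathfrak{b}$ to dominate the whole family by a single function. The only difference is presentational --- the paper writes $X\SM G=\bigcup_{n}F_n$ with $F_n$ closed and shrinks neighbourhoods of the $F_n$ to exhibit $X\SM G$ as a $G_\delta$-set, whereas you exhibit $G$ directly as a countable union of closed sets; the two are the same argument up to taking complements.
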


\begin{proof}
Let $d$ be a metric on $X$. For every $\e>0$ and each $x\in X$, let  $B(x,\e)=\{y\in X: d(x,y)<\e\}$ be an open ball centered at $x$. Since $G$ is $G_\delta$, we have $X\setminus G=\bigcup_{n\in\w}F_n$, where all $F_n$ are closed in $X$. For every $x\in G$, define $f_x\in\w^\w$ by
\[
f_x(n):=\min \big\{m\in\w: B\big(x,\tfrac{1}{2^m}\big)\cap F_n=\emptyset\} \quad (n\in\w).
\]
Since $|G|<\mathfrak{b}$, we obtain that the family $\{f_x:x\in G\}$ is a bounded subset of $\w^\w$. Therefore there exists $f\in\w^\w$ such that $f_x\leq^\ast f$ for every $x\in G$. For every $m\in\w$, set
\[
U_m :=\bigcup\Big\{B\big(y,\tfrac{1}{2^{m+f(n)}}\big): n\in\w, y\in F_n \Big\}.
\]
Then $X\setminus G=\bigcap_{m\in\w} U_m$. Thus $G=\bigcup_{m\in\w} X\setminus U_m$ is an $F_\sigma$-set.\qed
\end{proof}

\begin{example}\label{exa:Q-neq-sigma}
Under $\mathfrak{p}=\mathfrak{c}$, there exists a $\sigma$-set $X$ such that $|X|=\mathfrak{c}$, and hence $X$ is not a $Q$-set.
\end{example}
\begin{proof}
Let $[\w]^\w$ be the set of all infinite subsets $A$ of $\w$. Identifying $A$ with the characteristic function $\mathbf{1}_A$ of $A$, we can consider $[\w]^\w$ as a subspace of the Cantor cube $2^\w$. Let $\{G_\alpha: \alpha< \mathfrak{c}\}$ be the set of all $G_\delta$-sets of $[\w]^\w$.
For every $A\in [\w]^\w$, we define
\[
S(A):=\{B\in [\w]^\w: B\subseteq^\ast A\}
=\bigcup_{n\in\w} \left\{ B\in [\w]^\w: B\subseteq \big(\{0,1,...,n\}\cup A\big)\right\}.
\]
Then the set $S(A)$ is an $F_\sigma$-set in $[\w]^\w$.

We construct a set $X=\{M_\alpha:\alpha<\mathfrak{c}\}\subseteq [\w]^\w$ such that
\begin{enumerate}
\item[(a)] $M_\alpha \subsetneq^\ast M_\beta$  for all $\beta< \alpha<\mathfrak{c}$, and
\item[(b)] $G_\alpha \cap S(M_\alpha)$ is an $F_\sigma$-set in $S(M_\alpha)$   for every $\alpha<\mathfrak{c}$.
\end{enumerate}
To this end, fix an arbitrary $M_0\in [\w]^\w$. Assume that for $\alpha<\mathfrak{c}$ we have already constructed $M_\beta$ for every $\beta<\alpha$. Since $\alpha<\mathfrak{p}$ and the family $\{M_\beta:\beta<\alpha\}$ have s.f.i.p., there exists  $M^\ast\in [\w]^\w$ such that $M^\ast \subseteq^\ast M_\beta$ for every $\beta<\alpha$.
Take $M\in [\w]^\w$ such that $M\subsetneq^\ast M^\ast$.
Lemma 5.7.1 of \cite{Miller} implies that there exists $M_\alpha\in [\w]^\w$ such that $M_\alpha \subseteq^\ast M$ and the intersection $G_\alpha \cap S(M_\alpha)$ is an $F_\sigma$-set in $S(M_\alpha)$.
The construction is completed.

The condition (a) imply that $M_\beta \neq M_\alpha$ for $\be<\al<\mathfrak{c}$.
\smallskip

Since $|X|=\mathfrak{c}$, it follows from Remark \ref{rem:small-sms}(i) that $X$ is not a $Q$-set.
\smallskip

To prove that $X$ is a $\sigma$-set, it suffices to show that $G_\alpha\cap X$ is an $F_\sigma$-set in $X$ for every $\alpha<\mathfrak{c}$. Setting $Q:=S(M_\alpha)$, we check this by showing that the sets $(G_\alpha\cap X) \cap Q$ and $(G_\alpha\cap X)\setminus Q$ are $F_\sigma$-sets in $X$.
\smallskip

Then $Q$ is an $F_\sigma$-set in $[\w]^\w$. It follows from (b) that $G_\alpha \cap Q$ is an $F_\sigma$-set in $Q$.
Consequently, $G_\alpha \cap Q$ is an $F_\sigma$-set in $[\w]^\w$, and hence $(G_\alpha\cap X) \cap Q$ is an $F_\sigma$-set in $X$.
\smallskip

To prove that also $(G_\alpha\cap X)\setminus Q$ is an $F_\sigma$-set in $X$, we set $G:=X\setminus Q$. As $Q$ is an $F_\sigma$-set in $[\w]^\w$, we obtain that $G$ is a $G_\delta$-set in $X$.
It follows from (a) that $M_\gamma \subseteq Q=S(M_\al)$ for $\gamma\geq \al$.
Therefore, $G\subseteq \{M_\beta:\beta<\alpha\}$ and, hence, $|G|<\mathfrak{p}=\mathfrak{c}$.
It follows from Remark \ref{rem:small-sms}(ii) that $G$ is a $Q$-set.
Therefore, $G\cap G_\alpha$ is a $G_\delta$-set in $G$. Since $G$ is a $G_\delta$-set in $X$, then $G\cap G_\alpha$ is a $G_\delta$-set in $X$. So, by (\ref{equ:small-card-2}),  $|G\cap G_\al|< \mathfrak{p}\leq \mathfrak{b}$. Applying Lemma \ref{l:p-Gdelta} we obtain that $G\cap G_\alpha=(G_\alpha\cap X)\setminus Q$ is an $F_\sigma$-set in $X$.\qed
\end{proof}

For the reader convenience and further references, we summarize relationships between the classes of thing spaces under some axioms.

\begin{theorem}\label{t:sms}
\begin{enumerate}
\item[{\rm (i)}] Under $\mathrm{CH}$, we have:
$\mbox{\cm/ spaces} = \mbox{$Q$-sets} \subsetneq \mbox{$\sigma$-sets}  \subsetneq \mbox{$\lambda$-sets} \subsetneq \mbox{$\kappa$-sets}$.
\item[{\rm (ii)}] Under  $\w_1<\mathfrak{p}=\mathfrak{c}$, we have:
$\mbox{\cm/ spaces} \subsetneq \mbox{$Q$-sets} \subsetneq \mbox{$\sigma$-sets}  \subsetneq \mbox{$\lambda$-sets} \subsetneq \mbox{$\kappa$-sets}$.
\item[{\rm (iii)}] It is consistent that: $\mbox{\cm/ spaces} = \mbox{$Q$-sets} = \mbox{$\sigma$-sets}  \subsetneq \mbox{$\lambda$-sets} \subseteq \mbox{$\kappa$-sets}$.
\end{enumerate}
\end{theorem}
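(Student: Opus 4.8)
The theorem is a compendium of consistency and ZFC-provable inclusions among the five classes of thing sets, so the strategy is to assemble it from the implications already established (the displayed chain $\mbox{\cm/}\Rightarrow Q\Rightarrow\sigma\Rightarrow\lambda\Rightarrow\kappa$), from the examples constructed above, and from the items listed in Remark~\ref{rem:small-sms}. I would organize the proof as three short blocks, one for each of (i), (ii), (iii), and in each block first recall that the relevant inclusions hold in ZFC and then exhibit, under the stated axiom, a witness separating each consecutive pair.

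\emph{Proof of (i).} Assume $\mathrm{CH}$. By Remark~\ref{rem:small-sms}(i) every $Q$-set has cardinality $<\mathfrak c=\w_1$, hence is countable; conversely every \cm/ space is a $Q$-set by the displayed chain, so \cm/ spaces $=$ $Q$-sets. The strictness $Q$-sets $\subsetneq\sigma$-sets follows from Remark~\ref{rem:small-sms}(viii): under $\mathrm{CH}$ there is an uncountable $\sigma$-set, which cannot be a $Q$-set. The strictness $\sigma$-sets $\subsetneq\lambda$-sets is Remark~\ref{rem:small-sms}(vii). Finally $\lambda$-sets $\subsetneq\kappa$-sets: $\mathrm{CH}$ gives $\mathfrak p=\mathfrak c$, so Example~\ref{exa:gamma-neq-kappa} produces a $\kappa$-set that is not a $\lambda$-set.

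\emph{Proof of (ii).} Assume $\w_1<\mathfrak p=\mathfrak c$. Then by Remark~\ref{rem:small-sms}(ii) every \sm/ space of cardinality $<\mathfrak p$ is a $Q$-set, so taking an uncountable discrete space of size $\w_1<\mathfrak p$ separates \cm/ spaces from $Q$-sets. The separation $Q$-sets $\subsetneq\sigma$-sets is Example~\ref{exa:Q-neq-sigma}, which under $\mathfrak p=\mathfrak c$ gives a $\sigma$-set of size $\mathfrak c$, hence not a $Q$-set by Remark~\ref{rem:small-sms}(i). For $\sigma$-sets $\subsetneq\lambda$-sets, note $\mathfrak p=\mathfrak c$ forces $\mathfrak b=\mathfrak c$ (by \eqref{equ:small-card-1}), so Example~\ref{exa:gamma-neq-sigma} applies and yields a $\lambda$-set that is not a $\sigma$-set. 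The last strictness $\lambda$-sets $\subsetneq\kappa$-sets is again Example~\ref{exa:gamma-neq-kappa}, valid under $\mathfrak p=\mathfrak c$.

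\emph{Proof of (iii) and remarks on the obstacle.} For the consistency statement, work in a model where there are no uncountable $\sigma$-sets (Remark~\ref{rem:small-sms}(iv)); in such a model every $\sigma$-set is countable, hence \cm/, so \cm/ spaces $=$ $Q$-sets $=$ $\sigma$-sets. One must check this model can be chosen so that there \emph{is} an uncountable $\lambda$-set, giving the strictness $\sigma$-sets $\subsetneq\lambda$-sets; Remark~\ref{rem:small-sms}(ix) — consistency of a $\kappa$-set that is not a $\lambda$-set — is then recorded as $\lambda$-sets $\subseteq\kappa$-sets with the inclusion possibly proper. The main obstacle is bookkeeping rather than mathematical: one must verify that the model of Remark~\ref{rem:small-sms}(iv) is compatible with the existence of an uncountable $\lambda$-set (the Miller model of \cite{Miller1979} does contain one, e.g.\ of size $\aleph_1$, since $\mathfrak b$ there is uncountable and Remark~\ref{rem:small-sms}(v) applies), and that all the cited examples are simultaneously realizable under the single hypothesis heading each clause. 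No new construction is needed beyond what precedes the theorem.
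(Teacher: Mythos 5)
Your proposal follows essentially the same route as the paper: each clause is assembled from the ZFC chain $\mbox{\cm/}\Rightarrow Q\Rightarrow\sigma\Rightarrow\lambda\Rightarrow\kappa$, the items of Remark~\ref{rem:small-sms}, and Examples~\ref{exa:gamma-neq-kappa}, \ref{exa:gamma-neq-sigma} and \ref{exa:Q-neq-sigma}, exactly as in the paper's proof. Your treatment of (i) and (iii) is correct; in (iii) the worry about compatibility is unnecessary, since Remark~\ref{rem:small-sms}(v) is a ZFC statement and $\mathfrak b\geq\w_1$ always, so any model without uncountable $\sigma$-sets automatically contains an uncountable $\lambda$-set.

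There is one genuine slip in (ii): your witness for $\mbox{\cm/ spaces}\subsetneq\mbox{$Q$-sets}$ is ``an uncountable discrete space of size $\w_1$,'' but such a space is not separable (an uncountable discrete space has no countable dense subset), hence it is not a member of any of the five classes, all of which are by definition classes of separable metrizable spaces. The intended conclusion is still immediate: take instead any separable metrizable space of cardinality $\w_1$ (e.g.\ a subset of $\IR$ of size $\w_1$, which exists since $\w_1\leq\mathfrak c$); by Remark~\ref{rem:small-sms}(ii) and the hypothesis $\w_1<\mathfrak p$ it is a $Q$-set, and it is uncountable. With that replacement the argument is complete and matches the paper's.
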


\begin{proof}
(i) The equality and the inclusions follow from (i), (viii), and (vii) of Remark \ref{rem:small-sms} and Example \ref{exa:gamma-neq-kappa}, respectively.
\smallskip

(ii) The first inclusion follows from (ii) of Remark \ref{rem:small-sms}. The second one follows from Example \ref{exa:Q-neq-sigma}. The third inclusion follows from (\ref{equ:small-card-1}), (\ref{equ:small-card-2}) and Example \ref{exa:gamma-neq-sigma}. Example \ref{exa:gamma-neq-kappa} implies the last inclusion.
\smallskip

(iii) We consider a model of $\mathrm{ZFC}$ in which there are no uncountable $\sigma$-sets (see (iv) of Remark \ref{rem:small-sms}). In this model, the first two equalities hold.
The first inclusion follow from  (v) of Remark \ref{rem:small-sms}.
The last inclusion is trivial.\qed
\end{proof}

The following theorem summarizes local completeness, reflexivity and Baire type properties of $B_1(X)$ over separable metrizable spaces $X$ under some axioms, it immediately follows from Theorems \ref{t:B1-sms} and \ref{t:sms}.

\begin{theorem}\label{t:sms-B1}
The properties of being a Polish, complete, sequentially complete, Choquet or Baire space in the realm of Baire-one functions $B_1(X)$ over separable metrizable spaces $X$ are related to each other as follows:
\begin{enumerate}
\item[{\rm (i)}] under $\mathrm{CH}$: \hspace{2mm}
$
\mbox{Polish}  = \mbox{complete} \subsetneq  \mbox{sequentially complete}  \subsetneq  \mbox{Choquet}
 \subsetneq \mbox{Baire};
$
\item[{\rm (ii)}] under  $\om_1<\mathfrak{p}=\mathfrak{c}$: \hspace{2mm}
$
\mbox{Polish} \subsetneq \mbox{complete} \subsetneq  \mbox{sequentially complete}  \subsetneq  \mbox{Choquet}
 \subsetneq \mbox{Baire};
$
\item[{\rm (iii)}]
it is consistent that:  \hspace{2mm}
$
\mbox{Polish}  = \mbox{complete} = \mbox{sequentially complete}   \subsetneq  \mbox{Choquet}
 \subseteq \mbox{Baire}.
$
\end{enumerate}
\end{theorem}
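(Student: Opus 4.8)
The plan is to read Theorem~\ref{t:sms-B1} off from Theorem~\ref{t:sms} through the dictionary supplied by Theorem~\ref{t:B1-sms}. First I would record what that dictionary says: for a \sm/ space $X$, the five clauses of Theorem~\ref{t:B1-sms} assert that $B_1(X)$ is Polish, complete, sequentially complete, Choquet, or Baire precisely when $X$ is, respectively, \cm/, a $Q$-set, a $\sigma$-set, a $\lambda$-set, or a $\kappa$-set. Equivalently, inside the class of \sm/ spaces the collection $\{X : B_1(X)\text{ has property }\mathcal P\}$ coincides with the collection of thing sets associated with $\mathcal P$, for each of these five properties $\mathcal P$.

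The main body of the argument is then a substitution. Theorem~\ref{t:sms} describes, under each of the three set-theoretic hypotheses --- $(i)$ $\mathrm{CH}$, $(ii)$ $\om_1<\mathfrak{p}=\mathfrak{c}$, and $(iii)$ a model of $\mathrm{ZFC}$ in which there are no uncountable $\sigma$-sets --- the pattern of equalities and (strict or non-strict) inclusions among \cm/ spaces, $Q$-sets, $\sigma$-sets, $\lambda$-sets and $\kappa$-sets. Replacing each class by the corresponding property of $B_1(X)$ via the equivalences above converts every such equality into an equality of the respective classes $\{X : B_1(X)\text{ has property }\mathcal P\}$ and every inclusion into the corresponding inclusion; this produces exactly the three chains displayed in $(i)$, $(ii)$, $(iii)$ of Theorem~\ref{t:sms-B1}. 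In particular, the background implications Polish $\Rightarrow$ complete $\Rightarrow$ sequentially complete $\Rightarrow$ Choquet $\Rightarrow$ Baire for $B_1(X)$, already established before Theorem~\ref{t:B1-sms}, are simply the transcriptions of the general inclusions \cm/ spaces $\Rightarrow$ $Q$-sets $\Rightarrow$ $\sigma$-sets $\Rightarrow$ $\lambda$-sets $\Rightarrow$ $\kappa$-sets.

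The one point deserving a moment's care --- and the closest thing to an obstacle --- is that strict inclusions must remain strict after the translation. Here I would simply note that the strictness in Theorem~\ref{t:sms} is witnessed by explicit examples (those arising from Remark~\ref{rem:small-sms} together with Examples~\ref{exa:gamma-neq-kappa}, \ref{exa:gamma-neq-sigma} and \ref{exa:Q-neq-sigma}) which are themselves \sm/ spaces, being subspaces of $\IR$ or of $\IR^\w$. Hence Theorem~\ref{t:B1-sms} applies verbatim to each such $X$ and yields a space for which $B_1(X)$ has the weaker property but not the stronger one; because Theorem~\ref{t:B1-sms} provides two-sided equivalences, no separate verification that the separations survive the passage to $B_1(X)$ is needed, and the proof reduces to this bookkeeping.
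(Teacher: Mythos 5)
Your proposal is correct and matches the paper's argument, which derives Theorem \ref{t:sms-B1} immediately from Theorems \ref{t:B1-sms} and \ref{t:sms} by exactly the substitution you describe. The observation that the two-sided equivalences in Theorem \ref{t:B1-sms} automatically preserve strictness of the inclusions is the right (and only) point needing care, and you handle it correctly.
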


\begin{problem}
Is there in $\mathrm{ZFC}$ a $\kappa$-set that is not a $\lambda$-set? In other words, does there exist in $\mathrm{ZFC}$ a separable  metrizable space $X$ for which $B_1(X)$ is Baire but not Choquet?
\end{problem}

\begin{problem}
Is there in $\mathrm{ZFC}$ a $\lambda$-set that is not a $\sigma$-set? In other words, does there exist in $\mathrm{ZFC}$ a separable  metrizable space $X$ for which $B_1(X)$ is Choquet but not sequentially complete?
\end{problem}

\bibliographystyle{amsplain}

\end{document}